\newtheorem{defi}{Definition}[section]
\newtheorem{lemma}[defi]{Lemma}
\newtheorem{theorem}[defi]{Theorem}
\newcommand{\R}{\mathbb{R}}
\newcommand{\dsp}{\displaystyle}
\renewcommand{\div}{{\mathrm{div}}}
\newcommand{\supp}{{\mathrm{supp}}}
\newcommand{\loc}{\mathrm{loc}}
\title{Direct sampling methods for isotropic and anisotropic scatterers with point source measurements}
\author{Isaac Harris\thanks{Department of Mathematics, Purdue University, West Lafayette, IN 47907, USA; (harri814@purdue.edu, nguye686@purdue.edu) } \and Dinh-Liem Nguyen\thanks{Department of Mathematics, Kansas State University, Manhattan, KS 66506 USA; (dlnguyen@ksu.edu).} \and Thi-Phong Nguyen\footnotemark[1]}
\date{}
\begin{document}

\maketitle

\begin{abstract}
\noindent In this paper, we consider the inverse scattering problem for recovering either an isotropic or anisotropic scatterer from the measured scattered field initiated by a point source. We propose two new imaging functionals for solving the inverse problem. The first one employs a `far-field' transform to the data which we then use to derive and provide an explicit decay rate  for the  imaging functional. In order to analyze the behavior of this imaging functional we use the factorization of the near field operator as well as the Funk-Hecke integral identity. 
For the second imaging functional the Cauchy data is used to define the functional and its behavior is analyzed using the Green's identities. 
Numerical experiments are given in two dimensions for both isotropic and anisotropic scatterers.
\end{abstract}

\noindent{\bf Key words:}  inverse scattering, direct sampling method, near-field data, factorization method, isotropic and anisotropic scatterers \\

\noindent{\bf AMS subject classification:} 	35R30, 78A46

\section{Introduction} 
Here we will consider the inverse shape problem for recovering either an isotropic or anisotropic scatterer from the measured scattered field initiated by a point source. There are  many applications where one wishes to uses acoustic scattering waves to detect hidden structures in a given medium. This comes up in medical imaging and non-destructive testing. In many applications one has little to no a prior information of the scatterer that one wishes to reconstruct. Because of this {\it qualitative methods} (otherwise known as non-iterative or direct methods) have been used to solve multiple inverse shape problems in scattering as well as other imaging modalities. In this manuscript, we will study two new direct sampling imaging functionals to solve the inverse problem. Direct sampling methods are fast and stable algorithms to reconstruct the scattering object with little to no a priori information. 
They have been also investigated  in some studies under the name of orthogonality sampling method, see, e.g.~\cite{Potth2010,HarrisNguyen2020}.   These  sampling methods have been studied in great detail for far-field measurements for multiple inverse scattering problems, see~\cite{Potth2010, Ito2012, Ito2013, Liu2017, Nguye2019, HarrisNguyen2020} and references therein.  Direct sampling methods also have other variants \cite{Harri2019,dsm-fm} that are connected to the factorization method (see \cite{kirschbook} for details for details). 
One of the drawbacks of these sampling methods is that the analysis is typically incomplete and requires full-aperture data. There has been some work done in \cite{dsm-limap,dsm-dr} to justify these methods for limited-aperture data.

 Although direct sampling methods have been studied for many inverse scattering problems using far-field data  there is little to no investigation for the case of near-field data.  
 The imaging functionals are also studied for both  isotropic and anisotropic scatterers.  
In order to construct our imaging functionals and analyze their behavior we will need to develop suitable factorizations for the data operator for the measured scattered field initiated by a point source (see for e.g. \cite{Harris-Rome,nf-fm-isotropic,Shixu}). The factorization method was initially introduced in \cite{fm-paper} for far-field data but has been extended to many other models, see for e.g. \cite{kirschbook, Cakon2019,heat-fm}. 
Also we derive  explicit decay rates for our imaging functionals by two ways. The first is by applying a `far-field' transform to the measured scattered field and by appealing to the Funk-Hecke integral identity with the decay of the Bessel functions to prove the bounds. Another method for deriving the decay rate is to use the measured Cauchy data 
and the second Green's identity 

The rest of the paper is organized as follows. In Section \ref{sec:Iso} we begin by rigorously defining the scattering by an isotropic scatterer. Then we derive and analyze two new direct sampling imaging functionals. To do so, we first develop a suitable factorization of our measurement operator as well as discuss the Dirichlet-to-Far-Field operator. The  Dirichlet-to-Far-Field operator is critical to analyze the behavior for one of the imaging functionals. Then, in Section \ref{aniso} we similarly will consider the case for anisotropic scatterers. Here we show that the imaging functional with the `far-field' transform can also be used to recover anisotropic scatterers with similar resolution analysis. Lastly, in Section \ref{numerics} we provide a detailed numerical study of recovering both isotropic and anisotropic scatterers using the imaging functionals developed in the previous section.

\section{Isotropic Inverse Scattering Problem}\label{sec:Iso}
In this section, we will study two direct sampling methods for recovering a scatterer from point source measurements. To this end, let  $D \subset \R^d$ (for $d=2,3$) be a bounded simply connected open set with Lipschitz boundary and denote the inhomogeneous scattering medium. Outside of the scatterer we will assume that the medium is homogeneous with refractive index normalized to one. Denote by $q(x)\in L^{\infty}(\R^d)$ the contrast of the scattering medium with respect to free space such that $q = 1$ in $\R^d \setminus \overline{D}$ where $D=$ supp$(q)$. We consider the scattering by the incident point source generated at $y \in \Gamma$ where $\Gamma$ is a class $C^2$ curve/surface,  given by 
\begin{equation}\label{fund-solu}
	u^i(\cdot \,, y) = \Phi(\cdot \,, y) = \left\{ 
		\begin{array}{cl}
			\frac{\text{i}}{4} H^{(1)}_{0}(k|\cdot - y|) &\quad \text{if} \quad d = 2, \\[1.5ex]
			\displaystyle \quad\quad  \frac{\text{e}^{\text{i}k|\cdot - y|}}{4\pi|\cdot - y|} &\quad \text{if} \quad d = 3
		\end{array}
	\right.
\end{equation} where $k > 0$ is the wave number and $H^{(1)}_{0}$ is the first kind Hankel function of order zero. We let  $\Omega \in \R^d$  be a known bounded domain with boundary equal to the measurement/source boundary $\Gamma$ such that $D \subset \Omega$ and $\text{dist}(\Gamma , D)>0$.
The scattered field $u^s(\cdot \, , y)$ associated with the incident field $u^i(\cdot \, , y)$ solves the following equation
\begin{equation}
\label{eq:usca}
\left\{
	\begin{array}{lc}
		\Delta u^s(\cdot\, , y) + k^2(1+q) u^s(\cdot \, , y) = -k^2qu^i(\cdot \, , y), \quad \text{in} \quad \R^d \\[1.5ex]
	\dsp	\lim_{r \to \infty} r^{(d-1)/2}\left(\frac{\partial}{\partial r} - \text{i}k \right)u^s(\cdot \,, y) = 0,\quad r = |x|,
	\end{array}
\right.
\end{equation} 
where the second equation of \eqref{eq:usca} indicates the Sommerfeld radiation condition and holds uniformly over all angular directions.  If $\R^d \setminus \overline{D}$ is connected and $\Im(q) \geq 0$, then the scattering problem \eqref{eq:usca} is well-posed in $H^1_{\loc}(\R^d)$ for any $y \in \Gamma$ (see Chapter 8 of \cite{CK3}). It is well known that problem \eqref{eq:usca} can be written equivalently to the Lippmann-Schwinger equation (see \cite{CK3})
\begin{equation}
	u^s(x, y) = k^2 \int_{D} \Phi(x; \cdot) \Big(q u(\cdot\, ,y)\Big)  \mathrm{d}A. \label{LS-equ}
\end{equation} 
where \, $u(\cdot \, , y) = u^{s}(\cdot \, , y) + u^{i}(\cdot \, , y)$ is the total field. 

For the {\bf inverse problem}, we aim to recover the domain $D$ from the measured scattered field $u^s(x, y)$ for all $(x,y)\in \Gamma \times \Gamma$. {\color{black} In general, the measurement and source boundary need not be the same.} Two direct sampling imaging functionals will be analyzed for the case of an isotropic scatterer in this section. To begin, we first define the near-field operator $N: L^2(\Gamma) \longrightarrow L^2(\Gamma)$ given by  
\begin{equation}
\label{def:open}
	(Ng)(x)= \int_{\Gamma} u^s(x ,y)\, g(y)  \mathrm{d}s(y) \quad \forall \; x\in \Gamma. 
\end{equation} 
In order to develop a new imaging functional we first derive a factorization of the near-field operator. Here we develop a different factorization than what is given in \cite{nf-fm-isotropic}. Arguing similarly as in Section 4.2 of \cite{kirschbook} we can factorize the near-field operator $N$ using the following operators. Define the single layer potential $S: L^2(\Gamma) \longrightarrow L^2(D)$ such that 
\begin{equation}
(Sg)(x) = \int_{\Gamma} \Phi(x, y) g(y)  \mathrm{d}s(y), \quad \forall \; x\in D \label{S-operator}
\end{equation} 
and its dual operator $S^{\top}: L^2(D) \longrightarrow L^2(\Gamma)$  given by the volume potential 
\begin{equation}
(S^{\top} \varphi)(z) =  \int_{D} \Phi(x \, , z) \varphi(x)  \mathrm{d}x,  \quad \forall \; z \in \Gamma.
\end{equation}
Here the dual operator is understood with respect to the $L^2$ dual-product such that 
$$\langle  \varphi , Sg  \rangle_{L^2(D)} =\langle  S^{\top} \varphi , g  \rangle_{L^2(\Gamma)} \quad \text{for all } \quad g\in L^2(\Gamma) \textrm{ and } \varphi \in L^2(D)$$
where $\langle \phi \, , \psi \rangle_{L^2} = (\phi  , \overline{\psi} )_{L^2}$. Notice, that by linearity of the scattering problem that if the incident field $u^i$ is replaced by $Sg$ in \eqref{eq:usca} then we have that 
$$x \longmapsto \int_{\Gamma} u^s(x , y) g(y) \text{d}s(y) \quad  \forall\, x \in \R^d$$
is the corresponding scattered field.
We also define the bounded operator $T: L^2(D) \longrightarrow L^2(D)$ by
\begin{equation}
Tf =  \dsp k^2 q \big(w + f\big)|_{D}, \label{T-define}
\end{equation} 
where $w \in H^1_{\loc}(\R^d)$ solves the scattering equation \eqref{eq:usca} with $u^i(\cdot \, , y)$ is replaced by any arbitrary $f \in L^2(D)$ such that  
\begin{equation}
\label{eq:def:opet}
\left.
	\begin{array}{lc}
		\Delta w + k^2(1+q) w = -k^2 q f \quad \text{in} \quad \R^d \\[1.5ex]
		\dsp \lim_{r \to \infty} r^{(d-1)/2}\left(\frac{\partial}{\partial r} - \text{i}k \right)w= 0,\quad r = |x|.
	\end{array}
\right.
\end{equation} 
The boundedness of the operator $T$ comes from the well-posedness of \eqref{eq:def:opet} and the assumptions on the refractive index.
Notice, that for each $g \in L^2(\Gamma)$ we can denote by $f= Sg$ then by the well-posedness we can conclude that  
$$w(x)= \int_{\Gamma} u^s(x ,y)\, g(y)  \mathrm{d}s(y) \quad \forall \; x\in \R^d.$$
Now, since equation \eqref{eq:def:opet} is equivalent to the Lippmann-Schwinger equation \eqref{LS-equ} we have that 
$$	w(x) = k^2 \int_{D} \Phi(x ,\cdot ) \Big(q (w + Sg)\Big) \mathrm{d}A  \quad \forall \; x\in \R^d. $$
We notice that this implies that $w|_{\Gamma}$ is equivalent to $S^{\top} \, T \, Sg$ for any $x \in \Gamma$ by the definition of the operators, given above. This proves that the near-field operator has the factorization 
$$N g = S^{\top} \, T \, S g \quad \text{ for any} \quad g \in L^2(\Gamma)$$ 
since $w|_{\Gamma}$ is equal to $Ng$. 

\subsection{Direct Sampling Method with `Far-Field' Transformation}
We now develop the theory for a new direct sampling method for our inverse problem. The main idea is to increase the decay property of the imaging functional by implementing a so-called `Far-Field' transformation of the data. In similar direct sampling methods developed in the literature for the measurements of the scattered field given by point sources (see for e.g. \cite{Chen_2013,Ito2013}) one can only say that the imaging functionals will approximately decay as $\text{dist}(z , D)\to \infty$ where $z$ is the sampling point in $\R^d$. This is due to the use of the Helmholtz-Kirchhoff integral identity (see Lemma 3.2 of \cite{Chen_2013}). For the case of far-field measurements the decay  rate is given by the Funk-Hecke integral identity (see Lemma 2.7 of \cite{Liu2017}). Therefore, we can use the asymptotic decay rate of the Bessel functions for large arguments to develop the  explicit decay rate  as in \cite{Harri2019,HarrisNguyen2020}.  

Now that we have derived a suitable factorization of the near-field operator we  wish to rigorously define the Dirichlet-to-Far-Field transformation that will be used to define the imaging functional. To this end, let $v\in H^1_{\loc}(\R^d \setminus \overline{\Omega})$ be the unique solution to 
\begin{equation}
\label{eq:opeq}
\begin{array}{rll}
(\Delta +k^2) v  =  0 \quad \text{in}  \quad \R^d \setminus\overline{\Omega} \quad \text{ with } \quad v|_\Gamma & = & h  \\[1.5ex]
 \dsp \lim_{r \to \infty} r^{(d-1)/2}\left(\frac{\partial}{\partial r} - \text{i}k \right)v & = &0
\end{array} 
\end{equation}
for any $h \in H^{1/2}(\Gamma)$ (see Theorem 3.11 in \cite{CK3} for the well-posedness of this problem). Therefore, we have that $v$ has the expansion 
$$v(x)=  \frac{\text{e}^{\text{i}k|x|}}{|x|^{(d-1)/2}} \left\{v^{\infty}(\hat{x}) + \mathcal{O} \left( \frac{1}{|x|}\right) \right\}\; \textrm{  as  } \;  |x| \to \infty$$
where $\hat x:=x/|x|$ and $v^{\infty}(\hat x) $ is the corresponding far-field pattern (see Chapter 2 of \cite{CK3}). 
Being motivated by previous works on direct sampling methods for far-field measurements we define the Dirichlet-to-Far-Field transformation $\mathcal{Q}: H^{1/2}(\Gamma) \longrightarrow L^2(\mathbb{S}^{d-1})$ such that
\begin{align}
(\mathcal{Q} h)(\hat{x}) = v^{\infty}(\hat{x}), \quad \forall \; \hat{x} \in \mathbb{S}^{d-1} \label{Q-operator}
\end{align}
where $\mathbb{S}^{d-1}$ denotes the unit circle/sphere. 

We will now connect the factorization of the near-field operator with the Dirichlet-to-Far-Field transformation in order to derive an imaging functional to recover the scatterer $D$. Notice, that the volume potential 
$$ v= \int_{D}\Phi(x, \cdot) \varphi(x)  \mathrm{d}{x} \quad  \textrm{for any } \quad \varphi \in L^2(D)$$
solves \eqref{eq:opeq} where $h = S^{\top} \varphi$. It is well-known that $S^{\top}: L^2(D) \longrightarrow H^{3/2}(\Gamma)$ by Theorem 8.2 of \cite{CK3} and the Trace Theorem. By applying to the asymptotic expansion of the fundamental solution (see for e.g. \cite{CCH-book,CK3}) we have that 
\begin{equation}
\label{eq:qst}
(\mathcal{Q} \, S^{\top} \varphi) (\hat{y}) = v^{\infty}(\hat{y}) = \int_{D} \text{e}^{-\text{i}k \hat{y}\cdot x} \varphi(x)  \mathrm{d}{x},
\end{equation} for all $\hat{y} \in \mathbb{S}^{d-1}$. The above operator is the well known adjoint operator to the Herglotz wave function $ H: L^2( \mathbb{S}^{d-1}) \longrightarrow L^2(D)$ given by
$$H g = \int_{\mathbb{S}^{d-1}} \text{e}^{\text{i}k  x \cdot \hat{y}} g(\hat{y})  \mathrm{d} s(\hat{y}) \quad \text{ where } \quad (Hg , \varphi)_{ L^2(\mathbb{S}^{d-1})} = (g , H^* \varphi)_{L^2(D)} $$
for all $g \in L^2(\mathbb{S}^{d-1})$ and $\varphi \in L^2(D)$. Therefore, equation \eqref{eq:qst} can be written as 
$$\mathcal{Q} \, S^{\top} \varphi = H^*	\varphi \quad \textrm{for any } \quad \varphi \in L^2(D)$$
and by  appealing to the factorization $$N = S^{\top}TS \quad \text{ implies that} \quad \mathcal{Q} N =H^*TS.$$ 

Next, we will build an indicator function for the sampling method based on operator $\mathcal{Q}  N$ instead of $N$. Since $\Omega$ is known one can compute $\mathcal{Q}$ independently in order to construct the operator for solving the inverse problem. Also, note that $\mathcal{Q}$ can be computed in a multitude of ways. One can use boundary integral operators for the Helmholtz equation to derive the solution operator for \eqref{eq:opeq}(see for e.g. Chapter 3 of \cite{CK3}). Then by appealing to asymptotic formula for fundamental solution one can derive a formula for the operator $\mathcal{Q}$. Later in this section we will derive a formula for $\mathcal{Q}$ when $\Gamma$ is the boundary of a ball centered at the origin. Also note that, $\mathcal{Q}$ is independent of the underline scattering problem which implies that this operator can be used to study other problems in inverse scattering for near-field data sets.

In order to define our new imaging functional, we now introduce for each sampling point $z \in \Omega$ the functions $\phi^{(1)}_z(x) \in L^{2}(\Gamma)$ and $\phi^{(2)}_z(\hat{y}) \in L^2(\mathbb{S}^{d-1})$ such that 
$$\phi^{(1)}_z(x) = \overline{\Phi(x, z)}\quad  \textrm{ and } \quad  \phi^{(2)}_z(\hat{y}) = \text{e}^{-\text{i}k z \cdot \hat{y}} \quad \text{for any} \; z \in \Omega.$$
{\bf The imaging functional via Far-Field transform:} We can now define and analyze the direct sampling methods imaging functional via a far-field transform. To this end, we let $z \in \Omega$ be a sampling point and recall $N: L^2(\Gamma) \longrightarrow L^2(\Gamma)$ and $\mathcal{Q}: H^{1/2}(\Gamma) \longrightarrow L^2(\mathbb{S}^{d-1})$ as defined in \eqref{def:open} and \eqref{Q-operator} respectively. Then the imaging functional via a far-field transform is given by 
\begin{equation}
I_{\text{FF}}(z) = \Big|\Big(\mathcal{Q} N \phi^{(1)}_z, \phi^{(2)}_z \Big)_{L^2(\mathbb{S}^{d-1})}\Big|. \label{dsm1}
\end{equation} 
From the factorization in the previous section we can now develop the resolution analysis of the proposed imaging functional. This will give an explicit decay rate of the $I_{\text{FF}}(z)$ as dist$(z,D) \to \infty$ which will validate plotting $I_{\text{FF}}(z)$ to recover the scatterer $D$. The fact that the decay rate is explicit is due to the far-field transform and the Funk-Hecke integral identity.

In order to provide the decay rate we need the following result(i.e. the Helmholtz-Kirchhoff integral identity). 
\begin{lemma}\label{lem:auxi1}
Let $S: L^2(\Gamma) \longrightarrow L^2(D)$ be as defined in \eqref{S-operator} then we have that 
$$ \left(S\phi^{(1)}_z \right)(x) = \frac{1}{k}\Big(\Im \Phi(x, z) + \omega(x, z)\Big), \quad \forall x \in D, \;  z \in \Omega$$
where $\|\omega( \cdot \, , z) \|_{H^1(D)} \leq C$ where the constant $C$ depends on $\Gamma$ but is independent of  $z \in \Omega$.
\end{lemma}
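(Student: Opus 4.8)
The plan is to unfold the definition of $S$ and recognize $(S\phi^{(1)}_z)(x)$ as a product of two radiating fundamental solutions integrated over $\Gamma$, and then to extract its leading part via the Helmholtz--Kirchhoff identity. Using $\phi^{(1)}_z(y)=\overline{\Phi(y,z)}$ together with the symmetry $\Phi(y,z)=\Phi(z,y)$, I would first write
$$(S\phi^{(1)}_z)(x) = \int_{\Gamma}\Phi(x,y)\,\overline{\Phi(z,y)}\,\mathrm{d}s(y),$$
and regard the integrand as the product of $u(y):=\Phi(x,y)$ and $\overline{\Phi(z,y)}$, both of which solve $(\Delta+k^2)(\cdot)=0$ in $\Omega$ away from their respective singular points $y=x\in D\subset\Omega$ and $y=z\in\Omega$.

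Next I would apply the second Green's identity on $\Omega$ with two small balls $B_\varepsilon(x)$ and $B_\varepsilon(z)$ excised. Because both functions satisfy the Helmholtz equation in the punctured region, the volume term vanishes, and letting $\varepsilon\to0$ the two small-sphere contributions reproduce the point values $\overline{\Phi(x,z)}$ and $\Phi(x,z)$. With the orientation fixed appropriately this produces the Helmholtz--Kirchhoff identity
$$\int_{\Gamma}\Big(\Phi(x,y)\,\partial_{\nu(y)}\overline{\Phi(z,y)} - \overline{\Phi(z,y)}\,\partial_{\nu(y)}\Phi(x,y)\Big)\,\mathrm{d}s(y) = -2\text{i}\,\Im\Phi(x,z).$$
To collapse the left-hand side to the single-layer integral $(S\phi^{(1)}_z)(x)$ I would then invoke the Sommerfeld radiation condition on $\Gamma$ in the form $\partial_{\nu(y)}\Phi(x,y)=\text{i}k\,\Phi(x,y)+\rho_x(y)$ for the outgoing factor and $\partial_{\nu(y)}\overline{\Phi(z,y)}=-\text{i}k\,\overline{\Phi(z,y)}+\rho_z(y)$ for the incoming one. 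Substituting, the derivative terms collapse to $-2\text{i}k\int_{\Gamma}\Phi(x,y)\overline{\Phi(z,y)}\,\mathrm{d}s(y)$, and solving for the integral yields precisely $(S\phi^{(1)}_z)(x)=\tfrac1k\big(\Im\Phi(x,z)+\omega(x,z)\big)$, where $\omega$ is the collected remainder coming from $\rho_x,\rho_z$.

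Finally, for the uniform $H^1(D)$ bound I would avoid estimating the strongly singular remainder terms directly and instead bound $\omega(\cdot,z)=k\,(S\phi^{(1)}_z)-\Im\Phi(\cdot,z)$ piece by piece. Since $\text{dist}(\Gamma,D)>0$, both $\Phi(x,\cdot)$ and $\nabla_x\Phi(x,\cdot)$ are uniformly bounded on $\Gamma$ for $x\in D$, so $\|S\phi^{(1)}_z\|_{H^1(D)}\le C\sup_{z\in\Omega}\int_{\Gamma}|\Phi(z,y)|\,\mathrm{d}s(y)$; the weakly singular single-layer kernel is uniformly integrable over the $C^2$ surface $\Gamma$, giving a bound independent of $z$ even as $z\to\Gamma$. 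Because $\Im\Phi(\cdot,z)$ has a removable singularity and is therefore smooth and bounded in $H^1(D)$ uniformly for $z$ in the bounded set $\Omega$, the bound on $\omega$ follows.

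I expect the main obstacle to be controlling the uniformity in $z$ as $z$ approaches $\Gamma$, where $\Phi(z,\cdot)$ becomes singular, together with fixing the orientation-dependent sign and the factor $1/k$ in the Helmholtz--Kirchhoff step. The key simplification that makes this tractable is that the sampling region for $x$ is $D$, which stays bounded away from $\Gamma$, so all $x$-regularity is trivial and the entire estimate reduces to the uniform integrability of a weakly singular kernel in the $z$ variable.
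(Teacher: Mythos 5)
Your proposal is correct, but it does considerably more work than the paper, which disposes of this lemma in one line: ``This is a simple consequence of Lemma 3.2 in \cite{Chen_2013}'' (the Helmholtz--Kirchhoff identity from the reverse-time-migration literature). Your Green's-identity derivation --- excising small balls around $x$ and $z$, using that both $\Phi(x,\cdot)$ and $\overline{\Phi(z,\cdot)}$ are Helmholtz solutions so the volume terms vanish, and recovering $\overline{\Phi(x,z)}-\Phi(x,z)=-2\mathrm{i}\,\Im\Phi(x,z)$ from the excised spheres --- is exactly the proof of that cited identity, so in substance you have reproved the black-boxed ingredient rather than found a different one. Where you genuinely depart from (and improve on) the sketch implicit in the paper is the uniformity argument: instead of estimating the remainder $\omega$ through its boundary-integral representation involving $\rho_z(y)=\partial_{\nu(y)}\overline{\Phi(z,y)}+\mathrm{i}k\overline{\Phi(z,y)}$ --- which is delicate precisely because $\rho_z$ blows up as $z\to\Gamma$ --- you bound $\omega(\cdot,z)=k\,S\phi^{(1)}_z-\Im\Phi(\cdot,z)$ by the triangle inequality, using that $\mathrm{dist}(\Gamma,D)>0$ makes the $x$-dependence smooth, that the weakly singular kernel $\Phi(z,\cdot)$ is uniformly integrable on the $C^2$ surface $\Gamma$, and that $\Im\Phi(\cdot,z)$ (a Bessel function with removable singularity) is uniformly bounded in $H^1(D)$. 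This observation is worth making explicit: since the lemma only asserts boundedness of $\omega$ (not smallness or decay), the statement actually follows from the uniform bounds on the two pieces alone, and the Helmholtz--Kirchhoff computation serves only to identify $\tfrac{1}{k}\Im\Phi$ as the natural leading term --- which is also all the paper uses later, since in Theorem \ref{thm1} the decay comes entirely from the Funk--Hecke factor $H\phi^{(2)}_z$ while $\|S\phi^{(1)}_z\|_{L^2(D)}$ contributes only an $\mathcal{O}(1)$ bound.
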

\begin{proof}
This is a simple consequence of Lemma 3.2 in \cite{Chen_2013}.
\end{proof}
Recall, that by definition of the fundamental solution given in \eqref{fund-solu}
$$\Im \Phi(x, z) = \left\{\begin{array}{lr} \frac{1}{4} J_0(k | x - z|) \, \, & \quad \text{if} \quad d = 2  \,,\\
 				&  \\
\frac{1}{4\pi} j_0(k | x - z|) & \quad \text{if} \quad d = 3 \,.
 \end{array} \right. $$
where $J_0(t)$ and $j_0(t)$ are the zeroth Bessel and spherical Bessel function of the first kind. This implies that the leading order term in 
$$\| S\phi^{(1)}_z \|_{L^2(D)}= \mathcal{O}(1) \quad \text{ as dist}(z,D) \to \infty.$$ 
Without augmenting that data one can't get an explicit rate of decay from the Helmholtz-Kirchhoff  identity due to the presence of the $\omega(x, z)$ which is only given to be bounded as dist$(z,D) \to \infty$.

\begin{theorem}\label{thm1}
Let the imaging functional $I_{\text{FF}}(z)$ be defined by \eqref{dsm1}. Then  for every $z \notin D$ 
$$I_{\text{FF}}(z)  =  \mathcal{O} \left(\text{dist}(z,D)^{(1-d)/2}  \right)\quad \text{ as } \,\,\, \text{dist}(z,D) \to \infty \; \textrm{ for } \; d = 2, 3.$$
\end{theorem}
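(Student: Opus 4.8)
The plan is to exploit the identity $\mathcal{Q} N = H^* T S$ established above, together with the Funk--Hecke integral identity, to reduce the inner product defining $I_{\text{FF}}(z)$ to a weighted volume integral over $D$ whose kernel is a Bessel function evaluated at $k|z-x|$. The decay of that Bessel function in its argument will then furnish the claimed rate.

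First I would substitute the factorization and write $\mathcal{Q} N \phi^{(1)}_z = H^* \psi_z$, where $\psi_z := T S \phi^{(1)}_z \in L^2(D)$. Unfolding the adjoint Herglotz operator via \eqref{eq:qst} and the test function $\phi^{(2)}_z(\hat{y}) = \text{e}^{-\text{i}kz\cdot\hat{y}}$, and applying Fubini to exchange the order of integration, gives
$$\Big(\mathcal{Q} N \phi^{(1)}_z, \phi^{(2)}_z\Big)_{L^2(\mathbb{S}^{d-1})} = \int_D \psi_z(x) \left(\int_{\mathbb{S}^{d-1}} \text{e}^{\text{i}k\hat{y}\cdot(z-x)}\,\mathrm{d}s(\hat{y})\right)\mathrm{d}x.$$
The inner integral over the sphere is precisely the quantity controlled by the Funk--Hecke identity (equivalently, the Jacobi--Anger expansion integrated over $\mathbb{S}^{d-1}$), which evaluates to a constant multiple of $J_0(k|z-x|)$ when $d=2$ and of $j_0(k|z-x|)$ when $d=3$. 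Thus the inner product becomes $c_d \int_D \psi_z(x)\, b_d(k|z-x|)\, \mathrm{d}x$ with $b_d$ the relevant (spherical) Bessel function.

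Next I would bound this integral. Since $z \notin D$, every $x \in D$ satisfies $|z-x| \geq \text{dist}(z,D)$, so the large-argument asymptotics $J_0(t) = \mathcal{O}(t^{-1/2})$ and $j_0(t) = \mathcal{O}(t^{-1})$ yield $\sup_{x\in D}|b_d(k|z-x|)| = \mathcal{O}\big(\text{dist}(z,D)^{(1-d)/2}\big)$ uniformly over $D$. Pulling this supremum out and using Cauchy--Schwarz, the modulus of the inner product is at most $c_d\, |D|^{1/2}\, \|\psi_z\|_{L^2(D)}\, \mathcal{O}\big(\text{dist}(z,D)^{(1-d)/2}\big)$. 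It remains to check that $\|\psi_z\|_{L^2(D)}$ stays bounded as $z$ recedes: this holds because $T$ is bounded on $L^2(D)$ and, by Lemma \ref{lem:auxi1}, $\|S\phi^{(1)}_z\|_{L^2(D)} = \mathcal{O}(1)$, the Helmholtz--Kirchhoff decomposition having bounded Bessel part $\tfrac1k\Im\Phi(\cdot,z)$ and a remainder $\omega(\cdot,z)$ bounded in $H^1(D)$ with constant independent of $z$. Combining these gives the stated $\mathcal{O}\big(\text{dist}(z,D)^{(1-d)/2}\big)$ bound.

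The main obstacle I anticipate is not any single estimate but the bookkeeping that makes the decay genuinely explicit and uniform: one must ensure the constant multiplying the Bessel asymptotics is independent of the sampling point $z$, which hinges on the uniform-in-$z$ bound for $\|S\phi^{(1)}_z\|_{L^2(D)}$ supplied by Lemma \ref{lem:auxi1}. This is exactly the point stressed in the text — that the far-field transform converts the merely bounded Helmholtz--Kirchhoff remainder into an honestly decaying kernel — so care is needed to verify that the Bessel decay is not spoiled by the $\omega$ term, i.e. that the entire density $\psi_z$ (not just its leading part) is controlled in $L^2(D)$ uniformly as $\text{dist}(z,D) \to \infty$.
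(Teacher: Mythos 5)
Your proof is correct and follows essentially the same route as the paper's: the factorization $\mathcal{Q}N = H^{*}TS$, moving the adjoint onto the plane-wave test function so that the Funk--Hecke identity produces the Bessel kernel $J_0$ (resp.\ $j_0$), and then combining the large-argument Bessel decay with the boundedness of $T$ and Lemma \ref{lem:auxi1} to control $\|S\phi^{(1)}_z\|_{L^2(D)}$ uniformly in $z$. The only cosmetic difference is that you pull the supremum of the Bessel kernel out before applying Cauchy--Schwarz, whereas the paper estimates $\|H\phi^{(2)}_z\|_{L^2(D)}$ directly; both yield the same $\mathcal{O}\left(\text{dist}(z,D)^{(1-d)/2}\right)$ bound.
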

\begin{proof}
In order to prove the claim, recall that we have the factorization 
$$ \mathcal{Q} N g= H^*TS g \quad \text{for all } \,\, \in L^2(\Gamma)$$ 
Now, we can clearly see that by the definition of the adjoint that 
\begin{align*}
\left(\mathcal{Q} N \phi^{(1)}_z, \phi^{(2)}_z \right)_{L^2(\mathbb{S}^{d-1})}  &= \, \big(H^*TS \phi^{(1)}_z, \phi^{(2)}_z \big)_{L^2(\mathbb{S}^{d-1})} \\[1.ex]
	 & = \, \big( TS \phi^{(1)}_z, H \phi^{(2)}_z \big)_{L^2(D)}
\end{align*}
We now recall the Funk-Hecke integral identity, given by  
\[ \left(H \phi^{(2)}_z \right)(x) =  \int_{\mathbb{S}^{d-1}}  \mathrm{e}^{- \mathrm{i} k {(z - x)}  \cdot \hat{y} } \, \mathrm{d}s(\hat{y} ) =\left\{\begin{array}{lr} 2\pi J_0(k | x - z|) \, \, & \quad \text{if} \quad d = 2,\\
 				&  \\
4\pi j_0(k | x - z|) & \quad \text{if} \quad d = 3.
 \end{array} \right. \]
With this and the boundedness of the operator $T: L^2(D) \longrightarrow L^2(D)$ we have that there is a constant $C>0$ independent of $z$ such that 
\begin{align*}
\Big|\big(\mathcal{Q} N \phi^{(1)}_z, \phi^{(2)}_z  \big)_{L^2(\mathbb{S}^{d-1})}\Big| & \leq \, C \|S \phi^{(1)}_z\|_{L^2(D)} \|H \phi^{(2)}_z\|_{L^2(D)} \\
	&\leq C \| H \phi^{(2)}_z  \|_{L^2(D)} \Big( \|\Im \Phi(\cdot, z) \|_{L^2(D)} + \|\omega(\cdot, z) \|_{L^2(D)} \Big)
\end{align*}
 Since $J_0(t)$ has a decay rate of $t^{-1/2}$ and $j_0(t)$ has a decay rate of  $t^{-1}$ as $t \to \infty$ we have that   
 $$\| H \phi^{(2)}_z  \|_{L^2(D)} \leq C \left( \text{dist}(z,D)^{(1-d)/2} \right) \quad \text{ as } \,\,\, \text{dist}(z,D) \to \infty.$$ 
Together with  $\|\omega (\cdot, z) \|_{H^1(D)} \leq C$ for all $z \in \Omega$ we obtain the estimate 
$$\Big|\big(\mathcal{Q}N\phi^{(1)}_z, \phi^{(2)}_z  \big)_{L^2(\mathbb{S}^{d-1})}\Big| \leq C \text{dist}(z,D)^{(1-d)/2} \Big( 1 + \text{dist}(z,D)^{(1-d)/2} \Big) \quad \text{ as } \,\,\, \text{dist}(z,D) \to \infty$$
 for some constant $C > 0$ independent of $z \in \Omega$. This is obviously equivalently to the estimate 
$$\Big|\big(\mathcal{Q}N\phi^{(1)}_z, \phi^{(2)}_z  \big)_{L^2(\mathbb{S}^{d-1})} \Big| \leq C\left( \text{dist}(z,D)^{(1-d)/2} \right) \quad \text{ as } \,\,\, \text{dist}(z,D) \to \infty$$
which proves the claim.
\end{proof}

From this we have that the imaging functional $I_{\text{FF}}(z)$ should decay as the sampling point moves away from the scatterer.  Also, note that since $N: L^2(\Gamma) \longrightarrow L^2(\Gamma)$ is known from the measured data and $\mathcal{Q}: H^{1/2}(\Gamma) \longrightarrow L^2(\mathbb{S}^{d-1})$ can be precomputed without a priori knowledge of $D$, this is a fast and simple method for recovering the scatterer. 

For simplicity, we will consider the case when $\Gamma$ is the boundary of a  disk centered at the origin with radius $R$ fixed. In this case, $\Omega=B(0;R)$ is a disk where the unknown scatterer $D \subset B(0;R)$. Since we are considering the case when $\Gamma = \partial B(0;R)$ we can derive a formula for computing 
$$\mathcal{Q}: H^{1/2}(\Gamma) \longrightarrow L^2(\mathbb{S}^{d-1})$$ 
via separation of variables. Note that, similar calculation can also be done in three dimensions. Now, we identify $H^{1/2}(\Gamma)$ with the space $H^{1/2}(0,2\pi)$ and $L^2(\mathbb{S}^{d-1})$ with the space $L^2(0,2 \pi)$. Therefore, for all  $h(\theta) \in H^{1/2}(0,2\pi)$ can be written in terms of the Fourier series  
$$h(\theta) = \sum_{|m| = 0}^{\infty} {h}_{m} \text{e}^{\text{i}m\theta} \quad \text{ where } \quad  h_m =  \frac{1}{2\pi} \int\limits_0^{2\pi} h(\phi) \text{e}^{-\text{i} m \phi} \text{d} \phi \quad \text{ for all} \,\, m\in \mathbb{Z}.$$ 
One can easily check that the unique solution $v (r,\theta)$ where $r=|x|$ and $\theta = \text{Arg}(x_1+ \text{i} x_2)$ to equation \eqref{eq:opeq} can then be written as the series   
$$v(r,\theta) = \sum_{|m| = 0}^{\infty} {v}_{m}  H^{(1)}_{m}(kr) \text{e}^{\text{i}m\theta} \quad \text{ where } \quad  {v}_m = \frac{{h}_m}{H^{(1)}_{m}(kR)}  \quad \text{ for all} \,\, m\in \mathbb{Z}.$$
Here we let  $H^{(1)}_{m}$ denote the first kind Hankel function of order $m$.
Therefore, by appealing to the asymptotic relationship of the Hankel function 
$$ H^{(1)}_{m}(kr) = \sqrt{\frac{2}{\pi k r}} \text{e}^{\text{i}kr-\text{i}m\pi/2 -\text{i}\pi/4 } + \mathcal{O}(r^{-3/2}) \quad \text{ as } \quad r \to \infty$$
we have that 
\begin{align}
v^{\infty}(\theta)  = \sum_{|m| = 0}^{\infty} \frac{ \sqrt{2}(1-\text{i}) }{\sqrt{k\pi}} \frac{{h}_{m}}{H^{(1)}_{m}(kR)} \text{e}^{\text{i}m (\theta-\pi/2)}
 & = \frac{1}{2\pi} \int\limits_0^{2\pi}\frac{ \sqrt{2}(1-\text{i}) }{\sqrt{k\pi}} \sum_{|m| = 0}^{\infty} \frac{\text{e}^{\text{i}m(\theta - \phi - \pi/2)}}{H^{(1)}_{m}(kR)}  h(\phi) \text{d}{\phi} \nonumber \\[1.ex]
 & = \frac{1-\text{i}}{\pi\sqrt{2k\pi}} \int\limits_0^{2\pi} \sum_{|m| = 0}^{\infty} \frac{\text{e}^{\text{i}m(\theta - \phi - \pi/2)}}{H^{(1)}_{m}(kR)} h(\phi) \text{d}{\phi}. \label{Q-formula}
\end{align}
This implies that the operator $\mathcal{Q}: H^{1/2}(0,2\pi) \longrightarrow L^2(0,2 \pi)$ can be written as
$$ (\mathcal{Q} h)(\theta) = \int\limits_0^{2\pi} K(\theta, \phi) h(\phi) \text{d}{\phi} \quad \text{ with kernel function } \quad K(\theta, \phi) = \frac{1-\text{i}}{\pi\sqrt{2k\pi}} \sum_{|m| = 0}^{\infty} \frac{\text{e}^{\text{i}m(\theta - \phi - \pi/2)}}{H^{(1)}_{m}(kR)}.$$
In numerically evaluating the imaging functional $I_{\text{FF}}(z)$ given by \eqref{dsm1} we will need to approximate  $\mathcal{Q}$ with a discretized version of the operator that converges in norm. To this end, we consider the truncated series representation denoted by the operator truncated series for some $M \in \mathbb{N}$ 
$$ (\mathcal{Q}_M h)(\theta) = \int\limits_0^{2\pi} K_M (\theta, \phi) h(\phi) \text{d}{\phi} \quad \text{ with kernel function } \quad K_M (\theta, \phi) = \frac{1-\text{i}}{\pi\sqrt{2k\pi}} \sum_{|m| = 0}^{M} \frac{\text{e}^{\text{i}m(\theta - \phi - \pi/2)}}{H^{(1)}_{m}(kR)}.$$
In the next result, we prove convergence of the operator $\mathcal{Q}_M$ to the Dirichlet-to-Far-Field transformation in the operator norm from $H^{1/2}(0,2\pi) \longrightarrow L^2(0,2 \pi)$. Here we use the definition of the $H^{p}(0,2\pi)$-norm via the Fourier coefficients. 

\begin{lemma}\label{Q-converge}
Let $\mathcal{Q}: H^{1/2}(0,2\pi) \longrightarrow L^2(0,2 \pi)$ be the Dirichlet-to-Far-Field transformation defined by \eqref{Q-formula} and $\mathcal{Q}_M: H^{1/2}(0,2\pi) \longrightarrow L^2(0,2 \pi)$ be the truncated series for some $M \in \mathbb{N}$. Then we have norm-convergence with convergence rate given by
$$\|\mathcal{Q} - \mathcal{Q}_{M}\|_{H^{1/2}(0,2\pi) \mapsto L^2(0,2 \pi)} =\mathcal{O}\left( \frac{1}{ 2^M} \right) , \quad \text{as} \quad M \longrightarrow \infty.$$
\end{lemma}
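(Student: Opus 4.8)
The plan is to diagonalize $\mathcal{Q}$ in the Fourier basis and then read off the operator norm of the diagonal tail operator $\mathcal{Q}-\mathcal{Q}_M$ mode by mode. Writing $h(\phi)=\sum_{m\in\mathbb{Z}} h_m \mathrm{e}^{\mathrm{i}m\phi}$ and using $\int_0^{2\pi}\mathrm{e}^{\mathrm{i}(n-m)\phi}\,\mathrm{d}\phi = 2\pi\delta_{nm}$ in the kernel representation \eqref{Q-formula}, one sees that $\mathcal{Q}$ acts as a Fourier multiplier: the $m$-th output coefficient of $\mathcal{Q}h$ equals $\lambda_m h_m$, where
\[
\lambda_m = \frac{\sqrt{2}(1-\mathrm{i})}{\sqrt{k\pi}}\,\frac{\mathrm{e}^{-\mathrm{i}m\pi/2}}{H^{(1)}_{m}(kR)}, \qquad |\lambda_m| = \frac{2}{\sqrt{k\pi}}\,\frac{1}{\big|H^{(1)}_{m}(kR)\big|}.
\]
Consequently $\mathcal{Q}-\mathcal{Q}_M$ is exactly the multiplier that equals $\lambda_m$ for $|m|>M$ and vanishes for $|m|\le M$, so the whole problem reduces to understanding the size of $\lambda_m$ for large $|m|$.

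Next I would invoke the Fourier-coefficient definitions $\|g\|_{L^2(0,2\pi)}^2 = 2\pi\sum_m |g_m|^2$ and $\|h\|_{H^{1/2}(0,2\pi)}^2 = 2\pi\sum_m (1+m^2)^{1/2}|h_m|^2$. For a diagonal operator these expressions collapse the operator norm to a supremum over the active modes, since
\[
\|\mathcal{Q}-\mathcal{Q}_M\|_{H^{1/2}\to L^2}^2 = \sup_{h\ne 0}\frac{\sum_{|m|>M}|\lambda_m|^2|h_m|^2}{\sum_m (1+m^2)^{1/2}|h_m|^2} = \sup_{|m|>M}\frac{|\lambda_m|^2}{(1+m^2)^{1/2}},
\]
the supremum being attained by concentrating $h$ on a single mode $\mathrm{e}^{\mathrm{i}m\theta}$ and the upper bound following from term-by-term comparison. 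Thus it remains to estimate $\sup_{|m|>M}|\lambda_m|(1+m^2)^{-1/4}$, and as $(1+m^2)^{-1/4}\le 1$ it suffices to control $\sup_{|m|>M}|\lambda_m|$.

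The decisive step is the large-order behaviour of the Hankel function at the fixed argument $kR$. Writing $H^{(1)}_m(kR)=J_m(kR)+\mathrm{i}Y_m(kR)$ and using the classical fixed-argument asymptotics $J_m(kR)\sim (2\pi m)^{-1/2}(ekR/2m)^{m}$ and $Y_m(kR)\sim -(2/\pi m)^{1/2}(2m/ekR)^{m}$, the $Y_m$ term dominates and $\big|H^{(1)}_m(kR)\big|\sim (2/\pi m)^{1/2}(2m/ekR)^{m}$. In particular, once $m$ exceeds a threshold $m_0$ depending only on $kR$ (taking $m>2ekR$ forces $2m/ekR>4$, which absorbs the algebraic prefactor), we obtain $\big|H^{(1)}_m(kR)\big|\ge 2^{m}$ and hence $|\lambda_m|\le C\,2^{-m}$. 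Using $\big|H^{(1)}_{-m}(kR)\big|=\big|H^{(1)}_m(kR)\big|$ for the negative modes, it follows that for every $M\ge m_0$,
\[
\|\mathcal{Q}-\mathcal{Q}_M\|_{H^{1/2}\to L^2} \le \sup_{|m|>M}|\lambda_m| \le C\,2^{-(M+1)} = \mathcal{O}\!\left(2^{-M}\right),
\]
which is the claimed rate. Note that this bound is in fact quite loose: the genuine decay is super-geometric because of the $(1/m)^m$ factor, but $\mathcal{O}(2^{-M})$ is clean and more than sufficient for controlling the numerical truncation.

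The only genuinely delicate point is the asymptotic lower bound on $\big|H^{(1)}_m(kR)\big|$; everything else is bookkeeping with Parseval's identity and the multiplier structure. I expect the main care to go into making the threshold $m_0$ and the constant $C$ explicit, and into justifying that the $Y_m$ term dominates $J_m$ uniformly for $m\ge m_0$ rather than only in a leading-order limit, which is where a precise citation to the uniform large-order Bessel asymptotics should be supplied.
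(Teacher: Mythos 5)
Your proposal is correct and takes essentially the same route as the paper: both diagonalize $\mathcal{Q}-\mathcal{Q}_M$ in the Fourier basis and invoke the large-order, fixed-argument Hankel asymptotics $\big|H^{(1)}_m(kR)\big|\sim \sqrt{2/\pi m}\,\big(ekR/2m\big)^{-m}$ (together with $\big|H^{(1)}_{-m}\big|=\big|H^{(1)}_{m}\big|$) to extract the geometric factor $2^{-M}$ from the tail modes $|m|>M$. The only difference is bookkeeping: the paper estimates $\|(\mathcal{Q}-\mathcal{Q}_M)h\|_{L^2}^2$ termwise and absorbs the leftover $|m|$ prefactor into the $H^{1/2}$ weight $(1+|m|^2)^{1/2}$, whereas you use the exact sup-of-multipliers characterization of the norm of a diagonal operator, which incidentally reveals that the conclusion holds even with $L^2(0,2\pi)$ in place of $H^{1/2}(0,2\pi)$ as the source space, since the multipliers themselves decay super-geometrically.
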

\begin{proof}
In order to prove the claim, we see that 
\begin{align*}
\left[\mathcal{Q} - \mathcal{Q}_{M} \right] h &=\int\limits_{0}^{2\pi} \big[K(\theta,\phi)-K_M (\theta,\phi)\big] h(\phi) \text{d}{\phi} \\
&=\sum_{|m| = M+1}^{\infty} \frac{1-\text{i}}{\pi\sqrt{2k\pi}} \frac{{h}_{m}}{H^{(1)}_{m}(kR)} \text{e}^{\text{i}m (\theta-\pi/2)}.
\end{align*} 
Now, we compute the $L^2(0,2\pi)$-norm of the function $\left[\mathcal{Q} - \mathcal{Q}_{M} \right] h$ given by 
$$\|(\mathcal{Q} - \mathcal{Q}_{M})h\|^2_{L^2(0,2\pi) }\leq C \sum_{|m| = M+1}^{\infty}  \frac{ |{h}_{m}|^2}{\big| H^{(1)}_{m}(kR) \big|^2 }$$
In order to estimate, we now use the facts that  $H^{(1)}_{-m}(t) = (-1)^m H^{(1)}_{m}(t)$ for all $m \in \mathbb{Z}$ as well as the asymptotic result (see for e.g. \cite{bessel-webpage})
$$ -\text{i} H^{(1)}_{m}(t) \sim \sqrt{\frac{2}{\pi m}} \left(\frac{\text{e}t}{2m}\right)^{-m} \quad \text{ as } \quad m \to \infty.$$ 
That implies, there exists some constant $C$ independent from $m$ such that
\begin{align*}
 \|(\mathcal{Q} - \mathcal{Q}_{M})h\|^2_{L^2(0,2\pi)}&\leq C \sum_{|m| = M+1}^{\infty} |m| |{h}_{m}|^2  \left( \frac{\text{e}kR}{2m} \right)^{2m} \\
 	&= \frac{C}{2^{2M}} \sum_{|m| = M+1}^{\infty} |m| |{h}_{m}|^2  \left( \frac{\text{e}kR}{m} \right)^{2m}
\end{align*}
and we can clearly see that the root test implies that the sequence 
$$ \left( \frac{\text{e}kR}{m} \right)^{2m} \longrightarrow 0 \quad \text{ as } \quad  m \longrightarrow \infty \quad \text{ and is therefore bounded}.$$
This implies that 
$$\|(\mathcal{Q} - \mathcal{Q}_{M})h\|^2_{L^2(0,2\pi) }\leq \frac{C}{2^{2M}} \sum_{|m| = 0}^{\infty} \big( 1+ |m|^2\big)^{1/2} |{h}_{m}|^2  =  \frac{C}{2^{2M}}  \| h \|^2_{H^{1/2}(0,2\pi)}$$
which proves the claim. 
\end{proof}

In practice, the geometric convergence rate in Lemma \ref{Q-converge} means that we can approximate the operator $\mathcal{Q}$ by the truncated series $\mathcal{Q}_M$. Therefore, do to the fast convergence one does not need to keep many terms in the series. This will allow one to evaluate the approximate operator $\mathcal{Q}_M$ more efficiently in numerically solving the inverse problem.  

Remark: For the case when $\Gamma \neq  \partial B(0;R)$ we have that the solution $v$ to \eqref{eq:opeq} can be written using a single layer potential as in \cite{kirschbook}. Therefore, the far-field pattern can be obtained using the asymptotic of the fundamental solution. This is similar to obtaining the factorization of the far-field operator for the scattering by a sound-soft scatterer, see Chapter 2 of \cite{kirschbook} for details.  

\subsection{Direct Sampling Method with Cauchy Data}\label{DSMcd}
In this section, we develop another imaging functional for the recovering the scatterer $D$ from point source measurements. Here we will assume that we have the Cauchy data  given by $u^s(x,y)$ and $\partial_{\nu} u^s(x , y)$ for all  $(x,y) \in \Gamma \times {\Gamma}$.  We again note that the measurement and source boundary need not be the same. Again, the goal is to derive an explicit decay rate as dist$(z,D) \to \infty$ where $z\in \Omega$ is the sampling point. As we will see by using the Cauchy data we along with Green's identities  we can derive an explicit decay rate which can not be done by only taking the near-field data $u^s(x,y)$ which has not been studied in the literature.

 We will now define the imaging functional for the case of given Cauchy data.  Now, provided that both $u^s(x,y)$ and $\partial_{\nu} u^s(x,y)$ is given for all  $(x,y) \in \Gamma \times {\Gamma}$,  we define a new imaging functional
\begin{align}
I_{\text{CD}}(z) = \int_{\Gamma} \left | \int_\Gamma \partial_{\nu} \overline{ \Phi(x,z)} u^s(x,y) - \overline{\Phi(x,z)} \partial_{\nu} u^s(x,y)  \mathrm{d}s(x)\right |^\rho  \mathrm{d}s(y) \label{dsm2}
\end{align}
where for $\rho >0$ is a fixed constant. Here the parameter $\rho$ can be used to sharpen the resolution when recovering the scatterer numerically(see for e.g. \cite{Liu2017}). In order to analyze the $I_{\text{CD}}(z)$ we will write the imaging functional in terms of the operator $T: L^2(D) \longrightarrow L^2(D)$ defined in \eqref{T-define}. 

 \begin{theorem}
Let the imaging functional $I_{\text{CD}}(z)$ be defined by \eqref{dsm2}. Then for every $z \in \Omega$ 
$$I_{CD}(z) = \int_{\Gamma} \left |\int_D \frac{1}{2\text{i}} \Im \Phi(z,\cdot)  T \Phi(  \cdot \, ,y)  \text{d}A \right |^\rho \text{d}s(y)$$
where $u^s$ is the unique solution to \eqref{eq:usca}.
\end{theorem}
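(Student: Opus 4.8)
The plan is to reduce the inner boundary integral over $\Gamma$ to a volume integral over $D$ by means of Green's second identity, and then to recognize the resulting integrand through the operator $T$. Fix the source point $y\in\Gamma$ and the sampling point $z\in\Omega$, and abbreviate the inner integral as
$$J(z,y) := \int_\Gamma\Big(\partial_\nu\overline{\Phi(x,z)}\,u^s(x,y) - \overline{\Phi(x,z)}\,\partial_\nu u^s(x,y)\Big)\,\mathrm{d}s(x),$$
so that $I_{\mathrm{CD}}(z) = \int_\Gamma|J(z,y)|^\rho\,\mathrm{d}s(y)$. I would apply Green's second identity on $\Omega$ to the pair $u^s(\cdot,y)$ and $\overline{\Phi(\cdot,z)}$. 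Since $\overline{\Phi(\cdot,z)}$ is singular at $z\in\Omega$, I excise a small ball $B_\epsilon(z)$ and work on $\Omega\setminus\overline{B_\epsilon(z)}$, where $(\Delta+k^2)\overline{\Phi(\cdot,z)}=0$ classically. From \eqref{eq:usca} one has $(\Delta+k^2)u^s(\cdot,y) = -k^2 q\,u(\cdot,y)$ with $u=u^s+u^i$; substituting both relations, the bulk terms involving $u^s\overline{\Phi}$ cancel and the volume contribution collapses to $\int_D k^2 q\,\overline{\Phi(\cdot,z)}\,u(\cdot,y)\,\mathrm{d}A$, because $q$ is supported in $D$.

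Next I would pass to the limit $\epsilon\to0$. The integral over $\partial B_\epsilon(z)$, with the excised sphere oriented toward $z$, converges by the standard jump relation for the fundamental solution to the point value $u^s(z,y)$, while the volume integral converges since the singularity of $\overline{\Phi}$ is integrable in both $d=2,3$; this remains valid even when $z\in D$. Accounting for the orientation, this yields
$$J(z,y) = \int_D k^2 q(x)\,\overline{\Phi(x,z)}\,u(x,y)\,\mathrm{d}A(x) - u^s(z,y).$$
To remove the point value I would invoke the Lippmann--Schwinger representation \eqref{LS-equ} evaluated at $z$, namely $u^s(z,y) = k^2\int_D\Phi(z,x)\,q(x)\,u(x,y)\,\mathrm{d}A(x)$. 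Using the symmetry $\Phi(z,x)=\Phi(x,z)$ and the elementary identity $\overline{\Phi(x,z)}-\Phi(x,z) = -2\mathrm{i}\,\Im\Phi(x,z)$, the two volume integrals combine into a single integral whose integrand is a constant multiple of $\Im\Phi(z,x)\,k^2 q(x)\,u(x,y)$.

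Finally, I would identify the factor $k^2 q\,u|_D$ with the operator $T$: taking $f=\Phi(\cdot,y)=u^i(\cdot,y)$ in the definition \eqref{T-define}, the corresponding solution $w$ of \eqref{eq:def:opet} is exactly $u^s(\cdot,y)$, whence $T\Phi(\cdot,y) = k^2 q\,(u^s+u^i)|_D = k^2 q\,u|_D$. Substituting exhibits $J(z,y)$ as a constant times $\int_D\Im\Phi(z,\cdot)\,T\Phi(\cdot,y)\,\mathrm{d}A$, and raising to the power $\rho$ and integrating in $y$ over $\Gamma$ produces the claimed expression for $I_{\mathrm{CD}}(z)$ (the precise multiplicative constant being immaterial, since only $|\cdot|^\rho$ enters the functional). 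I expect the main obstacle to be the rigorous justification of Green's identity at the available regularity: $u^s(\cdot,y)$ lies only in $H^1_{\loc}(\R^d)$, so the terms on $\Gamma$ should in principle be read as an $H^{1/2}$--$H^{-1/2}$ duality pairing. This is controlled by observing that $\mathrm{dist}(\Gamma,D)>0$, so $u^s(\cdot,y)$ is a smooth solution of the homogeneous Helmholtz equation in a neighborhood of $\Gamma$ and its Cauchy data there are genuine functions; the only delicate point is then the local behaviour near $z$, which is handled by the excised-ball limit and the jump relation.
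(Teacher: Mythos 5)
Your proof is correct, and it takes a genuinely different route from the paper's. The paper never integrates by parts against the scattered field itself: it first uses the identification $T\Phi(\cdot\,,y)=k^2q\,u(\cdot\,,y)$ to rewrite $u^s(x,y)$ and $\partial_{\nu}u^s(x,y)$ as volume potentials over $D$, then swaps the order of integration, so that the boundary integral to be evaluated is $\int_\Gamma\big[\partial_{\nu}\overline{\Phi(x,z)}\,\Phi(x,w)-\overline{\Phi(x,z)}\,\partial_{\nu}\Phi(x,w)\big]\,\mathrm{d}s(x)$ for $w\in D$, and finally computes this by a Helmholtz--Kirchhoff-type identity for the two fundamental solutions $\overline{\Phi(\cdot\,,z)}$ and $\Phi(\cdot\,,w)$, handling the delta functions formally. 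You instead apply Green's second identity once, directly to the pair $u^s(\cdot\,,y)$ and $\overline{\Phi(\cdot\,,z)}$ on $\Omega\setminus\overline{B_\epsilon(z)}$, recover the point value $u^s(z,y)$ from the excised-ball limit, and then eliminate it with a single use of the Lippmann--Schwinger equation at $z$; the operator $T$ enters only at the last step. The ingredients are the same (Green's second identity plus Lippmann--Schwinger and the identity $T\Phi(\cdot\,,y)=k^2qu(\cdot\,,y)$), but applied in the opposite order. Your version deals with only one singular function at a time and makes the limiting argument explicit where the paper's is formal, at the modest price of needing $u^s(\cdot\,,y)$ continuous near $z$ when $z\in D$ for the point evaluation (which holds, since $q\in L^\infty$ gives $u^s\in H^2_{\loc}\subset C^0$ for $d=2,3$ --- a sentence worth adding). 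The paper's order of operations avoids that regularity question entirely, since its only singular objects are fundamental solutions.

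One point of disagreement, which is in your favor: you obtain the constant $-2\mathrm{i}$ in front of $\int_D\Im\Phi(z,\cdot)\,T\Phi(\cdot\,,y)\,\mathrm{d}A$, not the $\frac{1}{2\mathrm{i}}$ appearing in the statement. Your constant is the correct one. Carrying out the paper's own computation carefully gives $\int_\Gamma\big[\partial_{\nu}\overline{\Phi(x,z)}\,\Phi(x,w)-\overline{\Phi(x,z)}\,\partial_{\nu}\Phi(x,w)\big]\,\mathrm{d}s(x)=\overline{\Phi(z,w)}-\Phi(z,w)=-2\mathrm{i}\,\Im\Phi(z,w)$, whereas the paper records the left-hand side of its Green's identity as $\Phi(z,\cdot)-\overline{\Phi(z,\cdot)}$ (a sign slip) and then converts this to the prefactor $\frac{1}{2\mathrm{i}}$ instead of $2\mathrm{i}$ (a second slip). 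As you note, none of this matters for how the formula is used --- only the modulus raised to the power $\rho$ enters $I_{\text{CD}}$ and the decay estimate of Theorem \ref{thm2} --- but as a statement of equality the constant should read $-2\mathrm{i}$.
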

\begin{proof}
To begin, recall that by the Lippmann-Schwinger equation \eqref{LS-equ} we have that the Cauchy data can be written as the volume integrals 
$$ u^s(x,y) = k^2 \int_{D} \Phi(x , \cdot) \Big(q u( \cdot \, ,y)\Big)  \mathrm{d}A \quad \text{and} \quad \partial_{\nu} u^s(x,y) = k^2 \int_{D} \partial_{\nu}\Phi(x , \cdot) \Big(q u( \cdot \, , y)\Big)  \mathrm{d}A$$
where the total field $u( \cdot \, ;y) = u^{s}(\cdot \, , y) + \Phi(\cdot \, , y)$. Now, by the auxiliary scattering problem \eqref{eq:def:opet} and the definition of the operator $T$ we obtain the identity 
$$ u^s(x,y) =  \int_{D} \Phi(x , \cdot ) T \Phi( \cdot \,,y)  \mathrm{d}A \quad \text{and} \quad \partial_{\nu} u^s(x,y) = \int_{D} \partial_{\nu}\Phi(x , \cdot ) T \Phi(\cdot \,  ,y)  \mathrm{d}A.$$
Notice, that we have
\begin{align*}
& \int_\Gamma \partial_{\nu} \overline{ \Phi(x,z)} u^s(x,y) - \overline{\Phi(x,z)} \partial_{\nu} u^s(x,y)  \text{d}s(x)  \\
 &\hspace{0.4in} =  \int_\Gamma  \left[ \overline{\partial_{\nu}  \Phi(x,z)}  \int_{D} \Phi(x , \cdot) T \Phi(  \cdot \, ,y)  \mathrm{d}A - \overline{ \Phi(x,z)}   \int_{D} \partial_{\nu}  \Phi(x , \cdot) T \Phi(  \cdot \, ,y)  \mathrm{d}A\right]  \text{d}s(x) \\
 & \hspace{0.4in} =  \int_\Gamma  \int_D  \left[ \partial_{\nu}  \overline{ \Phi(x,z)} \Phi(x,\cdot)-  \overline{ \Phi(x,z)} \partial_{\nu} \Phi(x,\cdot)\right]  T \Phi(  \cdot \,,y) \text{d}A \,  \text{d}s(x)\\
 & \hspace{0.4in}= \int_D    \left[ \int_\Gamma \partial_{\nu}  \overline{ \Phi(x,z)} \Phi(x,\cdot) -  \overline{ \Phi(x,z)} \partial_{\nu} \Phi(x,\cdot)  \text{d}s(x) \right] T \Phi(  \cdot \, ,y)  \text{d}A.
\end{align*}
Using Green's second identity, we can prove that for all $z\in \Omega$ 
$$\Phi(z,\cdot) - \overline{\Phi(z,\cdot)} =  \int_\Gamma   \left[ \partial_{\nu}  \overline{ \Phi(x,z)} \Phi(x,\cdot) -  \overline{ \Phi(x,z)} \partial_{\nu} \Phi(x,\cdot) \right] \text{d}s(x).$$
Indeed, this is verified by first appealing to equations
$$\Delta_x \Phi(x,\cdot) + k^2\Phi(x,\cdot) = -\delta(x-\cdot) \quad \text{and } \quad \Delta_x \overline{\Phi(x,z)} + k^2\overline{\Phi(x,z)} = -\delta(x-z). $$
Then, multiplying the first and second equations by $ \overline{\Phi(x,z)}$ and $\Phi(x,\cdot)$, respectively and integrating over $\Omega$ we obtain
$$\int_{\Omega} \Delta_x \Phi(x,\cdot)  \overline{\Phi(x,z)} - \Delta_x \overline{\Phi(x,z)} \Phi(x,\cdot) \, \text{d}x=  - \overline{\Phi(\cdot,z)} +  \Phi(z,\cdot).$$
Then the desired identity follows from Green's second identity 
and the symmetry of the fundamental solution. Therefore,
we obtain 
$$\int_\Gamma \partial_{\nu} \overline{ \Phi(x,z)} u^s(x,y) - \overline{\Phi(x,z)} \partial_{\nu} u^s(x,y)  \text{d}s(x) = 
\int_D \frac{1}{2\text{i}} \Im \Phi(z,\cdot)  T \Phi(  \cdot \, ,y)  \text{d}A$$
and substituting this identity in the imaging functional we have that 
$$I_{CD}(z) = \int_{\Gamma} \left |\int_D \frac{1}{2\text{i}} \Im \Phi(z,\cdot)  T \Phi(  \cdot \, ,y)  \text{d}A \right |^\rho \text{d}s(y)$$
which proves the claim. 
\end{proof}

We recall that, 
$$\Im \Phi(\cdot, z) = \left\{\begin{array}{lr} \frac{1}{4} J_0(k | \cdot - z|) \, \, & \quad \text{if} \quad d = 2  \,,\\
 				&  \\
\frac{1}{4\pi} j_0(k | \cdot - z|) & \quad \text{if} \quad d = 3 \,.
\end{array} \right. $$
From the above result we have that the imaging functional $I_{\text{CD}}(z)$ should be maximal on the interior of the scatterer $D$ and takes small values on the exterior of $D$. Now, that we have the equivalent representation of $I_{CD}(z) $ with respect to the operator  $T$ as defined in \eqref{T-define} we can show the decay rate  for the imaging functional.

 \begin{theorem}\label{thm2}
Let the imaging functional $I_{\text{CD}}(z)$ be defined by \eqref{dsm2}. Then for every $z \notin D$ 
$$I_{\text{CD}}(z)  =  \mathcal{O} \left( \text{dist}(z,D)^{(1-d)\rho/2} \right) \quad \text{ as } \,\,\, \text{dist}(z,D) \to \infty \; \textrm{ for } \; d = 2, 3.$$
\end{theorem}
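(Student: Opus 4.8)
The plan is to start directly from the equivalent representation of $I_{\text{CD}}(z)$ established in the preceding theorem, namely
\[ I_{\text{CD}}(z) = \int_{\Gamma} \left| \int_D \frac{1}{2\text{i}} \Im \Phi(z,\cdot)\, T \Phi(\cdot,y)\, \text{d}A \right|^\rho \text{d}s(y), \]
and to bound the inner $D$-integral uniformly in $y \in \Gamma$ before raising it to the power $\rho$ and integrating over the source boundary. The whole point is to isolate all of the $z$-dependence inside the factor $\Im \Phi(z,\cdot)$, which decays through the Bessel functions, while showing that the remaining $y$-integral contributes only a finite constant independent of $z$.

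First I would apply the Cauchy--Schwarz inequality to the inner integral to separate the two factors,
\[ \left| \int_D \frac{1}{2\text{i}} \Im \Phi(z,x)\, T\Phi(x,y)\, \text{d}A(x) \right| \leq \frac{1}{2} \|\Im \Phi(z,\cdot)\|_{L^2(D)}\, \|T\Phi(\cdot,y)\|_{L^2(D)}. \]
For the first factor I would use the explicit representation of $\Im \Phi(z,\cdot)$ in terms of $J_0$ (for $d=2$) and $j_0$ (for $d=3$) recalled just above the statement, together with the asymptotic decay $J_0(t) = \mathcal{O}(t^{-1/2})$ and $j_0(t) = \mathcal{O}(t^{-1})$ as $t \to \infty$. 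Since $z \notin D$, every $x \in D$ satisfies $|x - z| \geq \text{dist}(z,D)$, and because the exponent $(1-d)/2$ is negative this yields the uniform pointwise bound $|\Im \Phi(z,x)| \leq C\, \text{dist}(z,D)^{(1-d)/2}$ for all $x \in D$ once $\text{dist}(z,D)$ is large. Integrating this bound over the bounded set $D$ gives $\|\Im \Phi(z,\cdot)\|_{L^2(D)} \leq C\, \text{dist}(z,D)^{(1-d)/2}$, with $C$ depending only on $D$ and $k$.

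It remains to control the second factor and to verify that the $y$-integral produces a constant free of $z$. Using the boundedness of $T: L^2(D) \to L^2(D)$ from \eqref{T-define} I would write $\|T\Phi(\cdot,y)\|_{L^2(D)} \leq \|T\|\, \|\Phi(\cdot,y)\|_{L^2(D)}$. Because $\text{dist}(\Gamma,D) > 0$, the map $(x,y) \mapsto \Phi(x,y)$ has no singularity on $D \times \Gamma$, so $\|\Phi(\cdot,y)\|_{L^2(D)}$ is bounded uniformly in $y \in \Gamma$; consequently $\int_\Gamma \|T\Phi(\cdot,y)\|_{L^2(D)}^\rho\, \text{d}s(y) \leq C\,|\Gamma| < \infty$ is a finite constant independent of $z$. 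Raising the inner bound to the power $\rho$, integrating over $\Gamma$, and combining the two estimates then gives
\[ I_{\text{CD}}(z) \leq C\, \text{dist}(z,D)^{(1-d)\rho/2} \quad \text{as} \quad \text{dist}(z,D) \to \infty, \]
which is the claimed decay rate.

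I do not expect a serious obstacle here, since the structure of the argument mirrors that of Theorem~\ref{thm1}; the only point requiring care is keeping the constant genuinely independent of $z$. This is precisely what forces the clean factorization in the Cauchy--Schwarz step: the $z$-dependence must sit entirely in $\Im \Phi(z,\cdot)$, whose decay is dictated by the Bessel asymptotics, while the factor $T\Phi(\cdot,y)$ and its integration over $\Gamma$ are manifestly $z$-free thanks to the positive distance between $\Gamma$ and $D$.
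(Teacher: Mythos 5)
Your proposal is correct and follows essentially the same route as the paper's own proof: both start from the representation of $I_{\text{CD}}(z)$ in terms of the operator $T$, apply the Cauchy--Schwarz inequality together with the boundedness of $T: L^2(D) \to L^2(D)$ to isolate $\|\Im \Phi(z,\cdot)\|_{L^2(D)}^\rho$, use $\text{dist}(\Gamma,D)>0$ to make the $y$-integral a $z$-independent constant, and conclude via the decay rates of $J_0$ and $j_0$. Your write-up is in fact slightly more careful than the paper's at the final step, since you spell out the pointwise bound $|x-z|\geq \text{dist}(z,D)$ underlying the Bessel-function estimate.
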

\begin{proof}
To begin, we first recall that 
$$I_{CD}(z) = \int_{\Gamma} \left |\int_D \frac{1}{2\text{i}} \Im \Phi(z,\cdot)  T \Phi(  \cdot \, ,y)  \text{d}A \right |^\rho \text{d}s(y)$$
and the fact that $T: L^2(D) \longrightarrow L^2(D)$ as a bounded linear operator. Therefore, by the Cauchy-Schwartz inequality we have that 
$$I_{CD}(z) \leq C  \|\Im \Phi(z,\cdot) \|_{L^2(D)}^\rho \int_{\Gamma} \| \Phi(\cdot \, , y) \|^\rho_{L^2(D)} \text{d}s(y).$$
Since, we have assumed that $\text{dist}(\Gamma , D)>0$ we can have that 
$$\int_{\Gamma} \| \Phi(\cdot \, , y) \|^\rho_{L^2(D)} \text{d}s(y) \quad \text{ if a fixed constant independent of $z$.}$$
Then, the estimate for the imaging functional becomes 
$$I_{CD}(z) \leq C  \|\Im \Phi(z,\cdot) \|_{L^2(D)}^\rho $$
and we again use the fact that  $J_0(t)$ has a decay rate of  $t^{-1/2}$ and $j_0(t)$ has a decay rate of  $t^{-1}$ as $t \to \infty$, which proves the claim.
\end{proof}
 
We also note that, if only $u^s(x,y)$ is measured for all  $(x,y) \in \Gamma \times {\Gamma}$, where  $\Gamma$  is the boundary of a ball with large radius,
and the measurement curve/surface $\Gamma$ is sufficiently far away from the scatterer then $\partial_{\nu} u^s(x,y) \approx \text{i}ku^s(x,y)$ on $\Gamma$ by the radiation condition. 
The imaging functional can be approximated by 
$$I_{\text{CD}}^{\mathrm{far}}(z)
 = \int_{\Gamma} \left | \int_\Gamma \big[\partial_{\nu}  \overline{\Phi(x,z)}- \text{i}k \overline{\Phi(x,z)} \big] u^s(x,y) \text{d}s(x)\right |^\rho \text{d}s(y).$$ 
One would expect that $I_{\text{CD}}(z) \approx I_{\text{CD}}^{\mathrm{far}}(z)$ for any $z \in \Omega$ which would imply that plotting $I_{\text{CD}}^{\mathrm{far}}(z)$ could also be used to recover the scatterer provided one does not have or can not compute $\partial_{\nu} u^s(x,y)$ for any $(x,y) \in \Gamma \times {\Gamma}$ provided $\Gamma$ is the boundary of a ball with large radius.

\section{Anisotropic Inverse Scattering Problem}\label{aniso}
One of the main advantages for using a qualitative reconstruction method is that the same algorithm will work for multiple scattering problems. In this section, we will show that the imaging functional via a far-field transform $I_{\text{FF}}(z)$ defined in \eqref{dsm1}  as well as $I_{\text{CD}}(z)$ can also be used to solve that inverse scattering problem for an anisotropic scatterer. This shows the novelty of this direct sampling method as being a simple and robust algorithm for recovering scatterers from point source measurements. In this problem, we are interested in determining the support of an anisotropic inhomogeneous medium, that is characterized by constitutive parameters $Q$ and $q$. 

We will assume, that the matrix valued parameter satisfies that $Q(x)$ is the zero matrix for all $x\notin D$ where $Q \in C^{1}  \left( D, \mathbb{C}^{d \times d} \right)$. Furthermore, assume that for any vector $\xi \in \mathbb{C}^d$
$$\overline{\xi}\cdot \Re \left( I+Q(x) \right) \xi\geq Q_{min} |\xi|^2>0 \quad \text{and}  \quad \overline{\xi}\cdot \Im \left(  Q(x) \right)\xi \leq 0 \quad \text{for a.e. $x \in D$} $$
where $I$ is the identity matrix. Here we assume $q$ satisfies the same assumptions as in the previous section. Similarly, to illuminate the medium by an incident field $u^i(\cdot \,, y) = \Phi(\cdot \,, y)$  generated by a point source $y \in \Gamma$ then the scattered field $u^s \in H^1_{\loc}(\R^d)$ satisfies 
\begin{equation}
\label{eq:uscaA}
\left\{
	\begin{array}{lc}
		\div \Big( (I+Q) \nabla u^s(\cdot \,, y)\Big) + k^2(1+q)  u^s(\cdot \,, y) = - \div \Big(Q \nabla u^i(\cdot \,, y)\Big) - k^2 qu^i(\cdot \,, y), \quad \text{in} \quad \R^d \\[1.5ex]
	\dsp	\lim_{r \to \infty} r^{(d-1)/2}\left(\frac{\partial}{\partial r} - \text{i}k \right)u^s(\cdot \,, y) = 0.
	\end{array}
\right.
\end{equation}  where $Q$ and $q$ are the contrast for the anisotropic scattering medium. The well-posedness of \eqref{eq:uscaA} is given in Theorem 1.38 of \cite{CCH-book}. We again have an equivalent to the Lippmann-Schwinger equation (see for e.g. \cite{CCH-book})
\begin{equation}
\label{eq:Lipp:aniso}
	u^s(x, y) = -\div \int_{D} \Phi(x, \cdot) \Big(Q \nabla u(\cdot \, ,y)\Big) \text{d}A + k^2 \int_{D} \Phi(x,\cdot) \Big(q u(\cdot\, , y)\Big) \text{d}A. 
\end{equation} 
where again the total field is given by  $u(\cdot \,, y) = u^{s}(\cdot \,, y)+ u^{i}(\cdot \,, y)$ and $D = \supp(Q) \cup \supp(q)$. The measurements for the inverse problem are $u^s(x , y)$ for $(x,y) \in \Gamma \times \Gamma$. The analysis of the direct sampling method to handle this problem can be done similarly to the isotropic case with  some modifications on the factorization of the near-field operator. This change is governed by the volume integral in the Lippmann-Schwinger equation \eqref{eq:Lipp:aniso}. The factorization of the near-field operator for an anisotropic scatterer was studied in \cite{Harris-Rome} where non-physical incident fields are used. In order to derive a suitable factorization we use similar arguments as in Chapter 2 of \cite{CCH-book}. 

We begin by reintroducing  near-field operator $N: L^2(\Gamma) \longrightarrow \; L^2(\Gamma)$ such that 
$$(N g)(x) =  \int_{\Gamma} u^s(x , y) g(y) \text{d}s(y), \quad \forall x \in \Gamma .$$
Just as in the previous section, we have that by linearity of the scattering problem we have that for single layer potential $S: L^2(\Gamma) \longrightarrow L^2(D)$ 
$$(Sg)(x) = \int_{\Gamma} \Phi(x, y) g(y)  \mathrm{d}s(y), \quad \forall \; x\in D$$
then for $Sg$ taken to be the incident field in \eqref{eq:uscaA} we have that 
$$x \longmapsto \int_{\Gamma} u^s(x , y) g(y) \text{d}s(y) \quad  \forall\, x \in \R^d$$
is the corresponding scattered field. Now, motivated by the anisotropic Lippmann-Schwinger equation \eqref{eq:Lipp:aniso} we define the following operators. 
It is well-known that $S: L^2(\Gamma) \longrightarrow H^1_{\loc}(\R^d)$ see \cite{McLean} Theorem 6.11. So we can define the bounded linear operator
\begin{equation}
\mathcal{S}_Q: L^2(\Gamma) \longrightarrow \;  [L^2(D)]^{d+1} \quad \text{ such that } \quad \mathcal{S}_Q g= \Big(\nabla Sg|_{D} \, , \, Sg|_{D}\Big). \label{S-aniso}
\end{equation}
We have that the corresponding dual operator $\mathcal{S}_Q^{\top}: [L^2(D)]^{d+1}  \longrightarrow L^2(\Gamma)$ is given by
\begin{equation}
\mathcal{S}_Q^{\top}( \bm{\varphi},\psi) = \div \int_{D} \Phi(z, y)  \bm{\varphi} (y) \text{d}y + \int_{D} \Phi(z, y) \psi(y) \text{d}y , \quad \forall \; z \in \Gamma.\label{Sdual-aniso}
\end{equation} 
To continue, just as in the isotropic case let $w \in H^1_{\loc}(\R^d)$ be the unique solution to the problem 
\begin{equation}
\label{eq:wA}
\left.
	\begin{array}{lc}
	\div \Big( (I+Q) \nabla w\Big) + k^2(1+q) w = - \div (Q  \bm{\varphi} ) - k^2 q  \psi \quad \text{in} \quad \R^d \\[1.5ex]
	\dsp	\lim_{r \to \infty} r^{(d-1)/2}\left(\frac{\partial}{\partial r} - \text{i} k \right)w = 0.
	\end{array}
\right.
\end{equation} 
for any given $(\bm{\varphi},\psi) \in [L^2(D)]^{d+1}$ where we assume that $\bm{\varphi} \in[L^2(D)]^d$ and $\psi \in L^2(D)$(see for e.g. \cite{CCH-book} for the well-posedness of \eqref{eq:wA}). Now, to continue we define the bounded operator 
\begin{equation}
\begin{array}{rl}
\mathcal{T}: [L^2(D)]^{d+1} \longrightarrow [L^2(D)]^{d+1} \quad \text{where } \quad \mathcal{T}( \bm{\varphi} ,\psi) = \Big( - Q \big( \bm{\varphi} +\nabla w \big)|_{D} \, , \,k^2q \big(w + \psi \big)|_{D}\Big).
\end{array} 
\end{equation} 

Similar to the  isotropic case, let $( \bm{\varphi} ,\psi)  =\left(\nabla Sg|_{D}, Sg|_{D}\right)= \mathcal{S}_Q g$ in equation \eqref{eq:wA}, which implies that the solution can be written as
$$w(x) = -\div \int_{D} \Phi(x, \cdot) \Big(Q (\nabla w + \nabla Sg)\Big) \text{d}A + k^2 \int_{D} \Phi(x,\cdot) \Big(q (w + Sg) \Big) \text{d}A \quad  \forall\, x \in \R^d. $$
Therefore,  we can see that $w$ given above is the scattered field for  \eqref{eq:uscaA} when $u^i$ is taken to be $Sg$. This implies that $w|_{\Gamma}$ is equal to the near-field operator $Ng$ for any $g$ which when combined with the definition of the above operators gives that 
$$Ng=\mathcal{S}_Q^{\top} \mathcal{T}\mathcal{S}_Q g \quad \text{ for any} \quad g \in L^2(\Gamma).$$ 
Now that we have this factorization of the near-field operator for anisotropic scatterers, we can show that the direct sampling methods imaging functional via a far-field transform can be used to recover the scatterer in this case as well.

\subsection{Direct Sampling Method with `Far-Field' Transformation}
Now we can proceed with the decay rate as as $\text{dist}(z , D)\to \infty$ for the direct sampling method via a far-field transform. We will show that the decay of the imaging functional is equivalent to the result in Theorem \ref{thm1} for isotropic scatterers. To begin, we recall the Dirichlet-to-Far-Field transformation $\mathcal{Q}: H^{1/2}(\Gamma) \longrightarrow L^2(\mathbb{S}^{d-1})$ as defined in \eqref{Q-operator} as well as the imaging functional 
$$I_{\text{FF}}(z) = \Big|\Big(\mathcal{Q} N \phi^{(1)}_z, \phi^{(2)}_z \Big)_{L^2(\mathbb{S}^{d-1})}\Big|$$ 
where 
 $$ \phi^{(1)}_z(x) = \overline{\Phi(x, z)}\quad  \textrm{for any } \quad  \phi^{(2)}_z(\hat{y}) = \text{e}^{-\text{i}k z \cdot \hat{y}} \quad \text{for any} \; z \in \Omega.$$
Again, one of the advantages is that the imaging functional does not change for the anisotropic scattering problem.

In order to develop the resolution analysis for anisotropic scatterers, we need to compute the operator $\mathcal{Q}\mathcal{S}_Q^{\top}$ for any $(\bm{\varphi},\psi) \in [L^2(D)]^{d+1}$. Notice, that the volume potential 
$$ v= \int_{D}\Phi(x, \cdot) \varphi(x)  \mathrm{d}{x} \quad  \textrm{for any } \quad \varphi \in L^2(D)$$
maps $L^2(D) \longrightarrow H^{2}(\Omega)$ as a bounded linear operator by \cite{CK3} Theorem 8.2 and standard elliptic regularity \cite{evans}. This implies that Range$(\mathcal{S}_Q^{\top})\subset H^{1/2}(\Gamma)$ and therefore the operator $\mathcal{Q}\mathcal{S}_Q^{\top}$ is well defined. By again applying to the asymptotic expansion of the fundamental solution we have that
$$\mathcal{Q}\mathcal{S}_Q^{\top} (\bm{\varphi},\psi) = \int_{D} \left[-\text{i}k \hat{y} \cdot \bm{\varphi}(x) + \psi(x) \right] \text{e}^{-\text{i}kx \cdot \hat{y}} \text{d}x \quad \forall \, \hat{y} \in \mathbb{S}^{d-1}.$$
From this, we recall the Herglotz wave function $H: L^2( \mathbb{S}^{d-1}) \longrightarrow L^2(D)$ given by
$$H g = \int_{\mathbb{S}^{d-1}} \text{e}^{\text{i}k  x \cdot \hat{y}} g(\hat{y})  \mathrm{d} s(\hat{y}) $$
and define  
$$\mathcal{H}_Q: L^2( \mathbb{S}^{d-1}) \longrightarrow [L^2(D)]^{d+1} \quad \text{such that} \quad \mathcal{H}_Qg = \Big(\nabla H g|_{D} \, , \, Hg|_{D}\Big).$$
Therefore, by the analysis in Chapter 2 of \cite{CCH-book} we have that 
$$\mathcal{H}_Q^* (\bm{\varphi},\psi) = \int_{D} \left[-\text{i}k \hat{y} \cdot \bm{\varphi}(x) + \psi(x) \right] \text{e}^{-\text{i}kx \cdot \hat{y}} \text{d}x  \quad \text{ which implies that} \quad \mathcal{Q}\mathcal{S}_Q^{\top} = \mathcal{H}_Q^*.$$ 
From this, we have obtained that $\mathcal{Q}N=\mathcal{H}_Q^* \mathcal{T}\mathcal{S}_Q$. We now have all we need to provide the resolution analysis result. 

\begin{theorem}\label{dsm3}
Let the imaging functional $I_{\text{FF}}(z)$ be defined by \eqref{dsm1}. Then for every $z \notin D$ 
$$I_{\text{FF}}(z)  =  \mathcal{O} \left( \text{dist}(z,D)^{(1-d)/2} \right) \quad \text{ as } \,\,\, \text{dist}(z,D) \to \infty \; \textrm{ for } \; d = 2, 3$$
where  $N: L^2(\Gamma) \longrightarrow L^2(\Gamma)$ is the near-field operator corresponding to \eqref{eq:uscaA}.
\end{theorem}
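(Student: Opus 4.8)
The plan is to mirror the proof of Theorem~\ref{thm1} from the isotropic case, now using the anisotropic factorization $\mathcal{Q}N=\mathcal{H}_Q^*\,\mathcal{T}\,\mathcal{S}_Q$ derived above in place of $\mathcal{Q}N=H^*TS$. First I would push the adjoint across the $L^2(\mathbb{S}^{d-1})$ inner product: by the definition of $\mathcal{H}_Q^*$,
$$\left(\mathcal{Q}N\phi^{(1)}_z,\phi^{(2)}_z\right)_{L^2(\mathbb{S}^{d-1})} = \left(\mathcal{T}\mathcal{S}_Q\phi^{(1)}_z,\,\mathcal{H}_Q\phi^{(2)}_z\right)_{[L^2(D)]^{d+1}}.$$
The Cauchy--Schwarz inequality together with the boundedness of $\mathcal{T}:[L^2(D)]^{d+1}\longrightarrow[L^2(D)]^{d+1}$ then reduces the claim to controlling the two factors $\|\mathcal{S}_Q\phi^{(1)}_z\|_{[L^2(D)]^{d+1}}$ and $\|\mathcal{H}_Q\phi^{(2)}_z\|_{[L^2(D)]^{d+1}}$ separately.

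For the first factor I would argue that it remains $\mathcal{O}(1)$, exactly as in the isotropic setting. Since $\mathcal{S}_Q\phi^{(1)}_z=\big(\nabla S\phi^{(1)}_z|_D,\,S\phi^{(1)}_z|_D\big)$, Lemma~\ref{lem:auxi1} gives $S\phi^{(1)}_z=k^{-1}\big(\Im\Phi(\cdot,z)+\omega(\cdot,z)\big)$ with $\|\omega(\cdot,z)\|_{H^1(D)}\le C$ uniformly in $z\in\Omega$. The $H^1$-bound controls $\nabla\omega$, while $\nabla\Im\Phi(\cdot,z)$ involves only first-order Bessel/spherical Bessel functions, which stay bounded; hence $\|\mathcal{S}_Q\phi^{(1)}_z\|_{[L^2(D)]^{d+1}}=\mathcal{O}(1)$ as $\text{dist}(z,D)\to\infty$.

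The decay must therefore come entirely from the second factor, and this is the crux of the argument. Writing $\mathcal{H}_Q\phi^{(2)}_z=\big(\nabla H\phi^{(2)}_z|_D,\,H\phi^{(2)}_z|_D\big)$, the Funk--Hecke identity yields $H\phi^{(2)}_z(x)=2\pi J_0(k|x-z|)$ in $d=2$ and $4\pi j_0(k|x-z|)$ in $d=3$, whose $L^2(D)$-norms decay like $\text{dist}(z,D)^{(1-d)/2}$ just as in Theorem~\ref{thm1}. The genuinely new ingredient is the gradient component: differentiating gives $\nabla_x J_0(k|x-z|)=-k\,J_1(k|x-z|)\,\widehat{(x-z)}$ and, analogously, $j_0'=-j_1$ in $d=3$, so the gradient is governed by the first-order functions $J_1,j_1$. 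The key observation is that $J_1(t)$ decays like $t^{-1/2}$ and $j_1(t)$ like $t^{-1}$, the very same rates as $J_0$ and $j_0$, so $\|\nabla H\phi^{(2)}_z\|_{L^2(D)}$ decays at the identical rate $\text{dist}(z,D)^{(1-d)/2}$. Consequently $\|\mathcal{H}_Q\phi^{(2)}_z\|_{[L^2(D)]^{d+1}}=\mathcal{O}\big(\text{dist}(z,D)^{(1-d)/2}\big)$.

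Combining the three estimates gives $I_{\text{FF}}(z)\le C\,\|\mathcal{S}_Q\phi^{(1)}_z\|\,\|\mathcal{H}_Q\phi^{(2)}_z\|=\mathcal{O}\big(\text{dist}(z,D)^{(1-d)/2}\big)$, which is the assertion. I expect the main obstacle to be precisely the verification that introducing the gradient components $\nabla S\phi^{(1)}_z$ and $\nabla H\phi^{(2)}_z$ does not spoil the decay rate relative to the isotropic case; the resolution is that the first-order Bessel functions share the large-argument asymptotics of their zeroth-order counterparts, so the extra derivative built into the anisotropic operators $\mathcal{S}_Q$ and $\mathcal{H}_Q$ costs nothing in the final resolution estimate.
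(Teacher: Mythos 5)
Your proposal is correct and follows essentially the same route as the paper: the factorization $\mathcal{Q}N=\mathcal{H}_Q^*\,\mathcal{T}\,\mathcal{S}_Q$, moving the adjoint across the inner product, Cauchy--Schwarz with the boundedness of $\mathcal{T}$, Lemma~\ref{lem:auxi1} for the $\mathcal{S}_Q\phi^{(1)}_z$ factor, and the Funk--Hecke identity with Bessel-function decay for the $\mathcal{H}_Q\phi^{(2)}_z$ factor. Your explicit computation $\nabla_x J_0(k|x-z|)=-k\,J_1(k|x-z|)\,\widehat{(x-z)}$ (and $j_0'=-j_1$) simply spells out what the paper states in passing, namely that the derivatives of $J_0$ and $j_0$ share the decay rates $t^{-1/2}$ and $t^{-1}$ respectively.
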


\begin{proof} The proof is similar to the case of an isotropic scatterer. 
Recall, that we have the factorization $\mathcal{Q}N=\mathcal{H}_Q^* \mathcal{T}\mathcal{S}_Q$ which implies that 
$$\big(\mathcal{Q}N  \phi^{(1)}_z \, , \, \phi^{(2)}_z \big)_{L^2(\mathbb{S}^{d-1})}  = \, \Big(\mathcal{H}^*_Q \, \mathcal{T} \, \mathcal{S}_Q  \phi^{(1)}_z  \, , \,  \phi^{(2)}_z \Big)_{L^2(\mathbb{S}^{d-1})}  = \, \Big(  \mathcal{T} \, \mathcal{S}_Q \phi^{(1)}_z  \, , \,  \mathcal{H}_Q \,\phi^{(2)}_z \Big)_{[L^2(D)]^{d+1}} $$
In order to estimate the inner-product given above, we notice that 
$$\mathcal{S}_Q  \phi^{(1)}_z = \Big(\nabla S\phi^{(1)}_z|_{D} \, , \, S\phi^{(1)}_z |_{D} \Big)$$
and by Lemma \ref{lem:auxi1}
$$\left(S\phi^{(1)}_z \right)(x) = \frac{1}{k}\Big(\Im \Phi(x, z) + \omega(x, z)\Big).$$
This gives the estimate 
$$\| \mathcal{S}_Q  \phi^{(1)}_z\|_{[L^2(D)]^{d+1}} \leq C \| \Im \Phi(\cdot, z)\|_{H^1(D)} + \|\omega (\cdot, z)\|_{H^1(D)}$$
where the $H^1(D)$-norm of $\omega( \cdot \, , z)$ is bounded with respect to $z$. Now, we consider 
$$\mathcal{H}_Q  \phi^{(2)}_z = \left(\nabla H  \phi^{(2)}_z|_{D} \, , \, H \phi^{(2)}_z|_{D}\right)$$
and recall the Funk-Hecke integral identity 
\[ \left(H \phi^{(2)}_z \right)(x) =  \int_{\mathbb{S}^{d-1}}  \mathrm{e}^{- \mathrm{i} k {(z - x)}  \cdot \hat{y} } \, \mathrm{d}s(\hat{y} ) =\left\{\begin{array}{lr} 2\pi J_0(k | x - z|) \, \, & \quad \text{if} \quad d = 2,\\
 				&  \\
4\pi j_0(k | x - z|) & \quad \text{if} \quad d = 3.
 \end{array} \right. \]
Therefore,  by the fact that $J_0(t)$ (as well as it's derivatives) has a decay rate of  $t^{-1/2}$ and $j_0(t)$ (as well as it's derivatives) has a decay rate of  $t^{-1}$ as $t \to \infty$ we obtain the estimates
$$\| \mathcal{S}_Q  \phi^{(1)}_z\|_{[L^2(D)]^{d+1}} \leq C\Big( 1 + \text{dist}(z,D)^{(1-d)/2} \Big)$$
as well as 
$$\|\mathcal{H}_Q  \phi^{(2)}_z \|_{[L^2(D)]^{d+1}} \leq C \text{dist}(z,D)^{(1-d)/2}$$
as $\text{dist}(z,D) \to \infty$. By combining the above estimates with the Cauchy-Schwartz inequality and the boundedness of $\mathcal{T}$ proves the claim.
\end{proof}

From Theorem \ref{dsm3} we have that the imaging functional $I_{\text{FF}}(z)$ is a simple yet stable method for recovering either an isotropic or anisotropic scatterer from the measured scattered field. This is useful since in many practical applications you may not know a prior if the scatterer is isotropic or anisotropic, or at least may not have estimates on the coefficients. Since the matrix valued coefficient uniquely determined by the scattering data \cite{not-uniq} this give a simple method for solving the inverse shape problem of recovering $D$ without any estimates for the matrix value coefficient. 

\subsection{Direct Sampling Method with Cauchy Data}
In this section, we present the resolution analysis for the the direct sampling method with Cauchy data. This would give that both imaging functionals derived in Section \ref{sec:Iso} can be used to recover isotropic and anisotropic scatterers. Just as in the previous section we will see that similar analysis can be used to study that indicator 
$$I_{\text{CD}}(z) = \int_{\Gamma} \left | \int_\Gamma \partial_{\nu} \overline{ \Phi(x,z)} u^s(x,y) - \overline{\Phi(x,z)} \partial_{\nu} u^s(x,y)  \mathrm{d}s(x)\right |^\rho  \mathrm{d}s(y)$$ 
initially defined in \eqref{dsm2}. We wish to prove a similar result as in Theorem \ref{thm2} for the case of an anisotropic scatterer.

In order to develop the resolution analysis we consider $w \in H^1_{\loc}(\R^d)$ that is the solution to equation 
\begin{equation}
\label{eq:aniw}
\left\{
	\begin{array}{lc}
	\div \Big( (I+Q) \nabla w\Big) + k^2(1+q) w = - \div \Big(Q \nabla f\Big) - k^2 q f, \quad \text{in} \quad \R^d \\[1.5ex]
	\lim\limits_{r \to \infty} r^{(d-1)/2}\left(\frac{\partial}{\partial r} - \text{i}k \right)w = 0.
	\end{array}
\right.
\end{equation}
for any given $f \in H^1(D)$. This is motivated by anisotropic scattering problem \eqref{eq:uscaA} for the scattered field $u^s$. The well-posedness of \eqref{eq:aniw} is guaranteed by  Theorem 1.38 of \cite{CCH-book}. From equation \eqref{eq:aniw} we see that $w$ satisfies 
$$\Delta w + k^2 w = - \div \Big(Q \nabla (w+f) \Big) - k^2 q(w + f)$$
and by the Lippmann-Schwinger equation (see (8.13) of \cite{CK3}) we have that 
$$w(x) = \int_{D} \Phi(x,\cdot) \Big(\div \big( Q \nabla (w+f)  \big) + k^2q (w + f) \Big) \,  \text{d}A. $$
By the above representation of $w$ we define the operator 
\begin{align}
{T_Q} f = \div(Q  (\nabla w + \nabla f) + k^2q (w + f) \label{Tq}
\end{align}
such that $T_Q :  H^1(D) \longrightarrow [H^1(D)]'$ where $[H^1(D)]'$ is the dual space of $H^1(D)$ with respect to the $L^2(D)$ dual-product as given in the previous section. Indeed, notice that for any $g \in H^1(D)$ we have that 
\begin{align*} 
\langle   g , {T_Q} f   \rangle_{L^2(D)}  & =  \int_{D} \Big[\div(Q  (\nabla w + \nabla f) + k^2q (w + f) \Big]g \, \text{d}A\\
							  &= - \int_{D} Q \nabla (w+f)  \cdot \nabla g - k^2q (w + f) g \, \text{d}A
\end{align*}
where we have used Green's first identity along with the fact that supp$(Q)=D$. Then it is a simple application of the Cauchy-Schwartz inequality and the well-posedness of \eqref{eq:aniw} to obtain 
\begin{align}
\big| \langle   g , {T_Q} f   \rangle_{L^2(D)} \big| \leq C \| f \|_{H^1(D)} \| g \|_{H^1(D)} \quad \text{for all } \quad f \, , \, g\in H^1(D)   \label{T-dual}
\end{align}
where the constant $C>0$ depends on the contrasts $Q$ and $q$ in the set $D$. This implies that $T_Q :  H^1(D) \longrightarrow [H^1(D)]'$ is a bounded linear operator.

Notice, that by anisotropic scattering problem \eqref{eq:uscaA} for the scattered field $u^s$ can be written as
$$u^s(x,y) = \int_{D} \Phi(x,\cdot) \Big(\div(Q \nabla u(\cdot \,,y)) + k^2q u(\cdot\, , y)\Big) \text{d}A$$ 
where again the total field is given by  $u(\cdot \,, y) = u^{s}(\cdot \,, y)+ u^{i}(\cdot \,, y)$. Therefore, by the definition of the operator $T_Q$ \eqref{Tq} we have that 
$$u^s(x,y) =  \int_{D} \Phi(x , \cdot ) T_Q \Phi( \cdot \,,y)  \mathrm{d}A \quad \text{and} \quad \partial_{\nu} u^s(x,y) = \int_{D} \partial_{\nu}\Phi(x , \cdot ) T_Q \Phi(\cdot \,  ,y)  \mathrm{d}A$$
where the integrals are understood as a dual-pairing on $H^1(D) \times [H^1(D)]'$. We now have all we need to prove that $I_{\text{CD}}(z)$ has the same resolution for thee anisotropic scattering problem.

 \begin{theorem}
Let the imaging functional $I_{\text{CD}}(z)$ be defined by \eqref{dsm2}. Then for every $z \in \Omega$ 
$$I_{CD}(z) = \int_{\Gamma} \left |\int_D \frac{1}{2\text{i}} \Im \Phi(z,\cdot)  T_Q \Phi(  \cdot \, ,y)  \text{d}A \right |^\rho \text{d}s(y)$$
where $u^s$ is the unique solution to \eqref{eq:uscaA}.
\end{theorem}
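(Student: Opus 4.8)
The plan is to follow the isotropic Cauchy-data theorem almost verbatim, replacing the bounded operator $T\colon L^2(D)\to L^2(D)$ by the bounded operator $T_Q\colon H^1(D)\to [H^1(D)]'$ and reinterpreting every volume integral over $D$ as the dual pairing on $H^1(D)\times[H^1(D)]'$. The starting point is the representation
$$u^s(x,y) = \int_D \Phi(x,\cdot)\,T_Q\Phi(\cdot,y)\,\text{d}A, \qquad \partial_\nu u^s(x,y) = \int_D \partial_\nu\Phi(x,\cdot)\,T_Q\Phi(\cdot,y)\,\text{d}A,$$
established just above, where both right-hand sides are understood in this dual sense. First I would substitute these into the inner $\Gamma$-integral of $I_{\text{CD}}(z)$ and collect the two terms so that $T_Q\Phi(\cdot,y)$ acts on the single integrand $x\mapsto \partial_\nu\overline{\Phi(x,z)}\,\Phi(x,\cdot) - \overline{\Phi(x,z)}\,\partial_\nu\Phi(x,\cdot)$, exactly as in the isotropic computation.

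The crucial manipulation is then to interchange the $\Gamma$-integration with the dual pairing, i.e. to pull $T_Q\Phi(\cdot,y)$ outside the integral over $x\in\Gamma$, so that the inner integral becomes
$$\int_D \left[\int_\Gamma \partial_\nu\overline{\Phi(x,z)}\,\Phi(x,\cdot) - \overline{\Phi(x,z)}\,\partial_\nu\Phi(x,\cdot)\,\text{d}s(x)\right] T_Q\Phi(\cdot,y)\,\text{d}A.$$
The bracketed boundary integral is precisely the quantity evaluated in the isotropic proof by Green's second identity on $\Omega$: for every $z\in\Omega$ it equals $\Phi(z,\cdot)-\overline{\Phi(z,\cdot)}$. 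Since this computation involves only the fundamental solution and the hypothesis $z\in\Omega$, it carries over unchanged to the anisotropic setting. Substituting it back, exactly as in the isotropic case, reduces the inner integral to $\int_D \tfrac{1}{2\text{i}}\,\Im\Phi(z,\cdot)\,T_Q\Phi(\cdot,y)\,\text{d}A$; raising this to the power $\rho$ and integrating over $y\in\Gamma$ then yields the claimed formula.

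The main obstacle, and the only genuine difference from the isotropic argument, is justifying the interchange of the boundary integration with the dual pairing, since $T_Q\Phi(\cdot,y)$ is now an element of $[H^1(D)]'$ rather than an $L^2(D)$ function, so Fubini's theorem does not apply directly. I would resolve this by noting that, because $\text{dist}(\Gamma,D)>0$, the kernels $x\mapsto\Phi(x,\cdot)$ and $x\mapsto\partial_\nu\Phi(x,\cdot)$ are smooth in the $D$-variable for each $x\in\Gamma$ and depend continuously on $x$; hence $x\mapsto \partial_\nu\overline{\Phi(x,z)}\,\Phi(x,\cdot)-\overline{\Phi(x,z)}\,\partial_\nu\Phi(x,\cdot)$ is a continuous, and therefore Bochner-integrable, $H^1(D)$-valued function on the compact curve $\Gamma$. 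The bounded linear functional $T_Q\Phi(\cdot,y)$ then commutes with this Bochner integral, which legitimizes pulling it outside. One also checks that the limiting test function $\Im\Phi(z,\cdot)$ is an entire solution of the Helmholtz equation and hence lies in $H^1(D)$, so the final dual pairing of $\Im\Phi(z,\cdot)$ against $T_Q\Phi(\cdot,y)$ is well defined. With these functional-analytic points in place, the remainder of the proof is identical to the isotropic case.
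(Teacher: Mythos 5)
Your proposal is correct and takes essentially the same route as the paper, whose entire proof of this theorem is the single remark that the argument is analogous to the isotropic Cauchy-data case. You actually supply more detail than the paper: the Bochner-integral justification for commuting the functional $T_Q\Phi(\cdot\,,y)\in[H^1(D)]'$ with the integral over $\Gamma$, and the observation that $\Im\Phi(z,\cdot)\in H^1(D)$ so the final dual pairing makes sense, are exactly the points where the isotropic Fubini argument needs modification and which the paper leaves implicit.
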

\begin{proof}
The proof is analogous to what is presented in Section \ref{DSMcd}. 
\end{proof} 

Therefore, we now the similar decay rate for the imaging functional. 

 \begin{theorem}\label{dsm4}
Let the imaging functional $I_{\text{CD}}(z)$ be defined by \eqref{dsm2}. Then for every $z \notin D$ 
$$I_{\text{CD}}(z)  =  \mathcal{O} \left( \text{dist}(z,D)^{(1-d)\rho/2} \right) \quad \text{ as } \,\,\, \text{dist}(z,D) \to \infty \; \textrm{ for } \; d = 2, 3.$$
\end{theorem}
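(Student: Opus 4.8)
The plan is to follow the proof of Theorem \ref{thm2} essentially verbatim, with the one substantive change that the operator governing the anisotropic problem, $T_Q$, is bounded from $H^1(D)$ into its dual $[H^1(D)]'$ rather than acting on $L^2(D)$. First I would invoke the representation established in the immediately preceding theorem, namely
$$I_{CD}(z) = \int_{\Gamma} \left|\int_D \frac{1}{2\text{i}}\,\Im \Phi(z,\cdot)\, T_Q \Phi(\cdot\,,y)\,\text{d}A\right|^\rho \text{d}s(y),$$
and interpret each inner integral as the dual pairing $\langle \Im \Phi(z,\cdot)\, , \, T_Q \Phi(\cdot\,,y)\rangle_{L^2(D)}$ on $H^1(D)\times[H^1(D)]'$, the $\frac{1}{2\text{i}}$ being a harmless constant once the modulus is taken.

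Next I would apply the boundedness estimate \eqref{T-dual} for $T_Q$ to bound each inner pairing by $C\,\|\Im \Phi(z,\cdot)\|_{H^1(D)}\,\|\Phi(\cdot\,,y)\|_{H^1(D)}$. Since $\text{dist}(\Gamma,D)>0$, the map $y\mapsto \|\Phi(\cdot\,,y)\|_{H^1(D)}$ is uniformly bounded on $\Gamma$, so that $\int_\Gamma \|\Phi(\cdot\,,y)\|^\rho_{H^1(D)}\,\text{d}s(y)$ is a finite constant independent of $z$. This reduces the claim to controlling $\|\Im \Phi(z,\cdot)\|^\rho_{H^1(D)}$, exactly as in the isotropic case except that the full $H^1(D)$-norm now appears in place of the $L^2(D)$-norm.

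Then I would estimate $\|\Im \Phi(z,\cdot)\|_{H^1(D)}$ using the explicit form of $\Im \Phi$ in terms of $J_0(k|x-z|)$ for $d=2$ and $j_0(k|x-z|)$ for $d=3$. Both $J_0$ and $j_0$ decay like $t^{-1/2}$ and $t^{-1}$ respectively, and crucially so do their derivatives, since $J_0'=-J_1$ with $J_1(t)=\mathcal{O}(t^{-1/2})$ and analogously for the spherical Bessel function. Hence the gradient contribution to the $H^1$-norm inherits the same decay as the function itself, yielding $\|\Im \Phi(z,\cdot)\|_{H^1(D)} \leq C\,\text{dist}(z,D)^{(1-d)/2}$ as $\text{dist}(z,D)\to\infty$; raising to the power $\rho$ gives the stated rate. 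The only genuine departure from Theorem \ref{thm2} is precisely this upgrade from $L^2$ to $H^1$, so the main point to verify is that the gradient of $\Im \Phi$ carries the Bessel-function decay, which is immediate from the standard asymptotics together with the derivative identities; everything else is a routine repetition of the isotropic argument.
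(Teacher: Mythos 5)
Your proposal is correct and follows the paper's own proof essentially step for step: the representation of $I_{\text{CD}}(z)$ via $T_Q$, the dual-pairing estimate \eqref{T-dual}, the uniform bound on $\int_\Gamma \|\Phi(\cdot\,,y)\|^\rho_{H^1(D)}\,\mathrm{d}s(y)$ from $\text{dist}(\Gamma,D)>0$, and the decay of $J_0$, $j_0$ and their derivatives to control $\|\Im\Phi(z,\cdot)\|_{H^1(D)}$. The only difference is that you spell out the derivative identities (e.g.\ $J_0'=-J_1$) that the paper leaves implicit, which is a welcome clarification rather than a departure.
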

\begin{proof}
We first note that just as in the previous section we have that
$$I_{CD}(z) = \int_{\Gamma} \left |\int_D \frac{1}{2\text{i}} \Im \Phi(z,\cdot)  T_Q \Phi(  \cdot \, ;y)  \text{d}A \right |^\rho \text{d}s(y)$$
Therefore, by the estimate \eqref{T-dual} we have that 
$$I_{CD}(z) \leq C  \|\Im \Phi(z,\cdot) \|_{H^1(D)}^\rho \int_{\Gamma} \| \Phi(\cdot \, , y) \|^\rho_{H^1(D)} \text{d}s(y).$$
Arguing just as in Theorem \ref{thm2}, we have that the imaging functional is bounded above such that  
$$I_{CD}(z) \leq C  \|\Im \Phi(z,\cdot) \|_{H^1(D)}^\rho. $$
Again, we use the fact that $J_0(t)$ (as well as it's derivatives) has a decay rate of  $t^{-1/2}$ and $j_0(t)$ (as well as it's derivatives) has a decay rate of $t^{-1}$ as $t \to \infty$. Proving the claim.
\end{proof}

 From the results given in this and the previous sections gives reconstruction method for recovering scatterers for both Isotropic and Anisotropic scatterers via direct sampling methods. This is new in the since that we have developed the resolution analysis. This justifies that plotting the proposed indicators will recover the scatterer. We see that the imaging functional works for multiple types of scatterers which implies that one does not need a priori knowledge of the type of scatterer that is being recovered. 
 
\section{Numerical Examples }\label{numerics}

We now present several numerical examples in 2D to illustrate the performance of the proposed imaging functionals $I_{\text{FF}}$ and $I_{\text{CD}}$ for  reconstructing both isotropic and anisotropic scatterers. In all our examples. we choose $\Gamma$ to be the boundary of the circle centered at $0$ with radius $R=3$. In order to discretize the curve $\Gamma$ we take $m$ uniformly distributed points given by $y_j$.
The synthetic data is then generated by the $m$ point sources that are located at $y_j$ for $j=1, \cdots , m$. We first solve numerically the direct problem by the spectral solver (see \cite{numericspaper} for details). This method is based on writing the problem under the form of Lippmann-Schwinger equation and applying FFT to accelerate matrix-vector product. This corresponds to the data $ u^s(x_i , y_j)$ where $x_i \in \Gamma$ are uniformly distributed points for $i=1, \cdots , m$. In practice, the measured data is always given with noise. Therefore, we denote by $\delta$ the given noisy level and we define the noisy data as
$$u^{s,\delta} (x_i , y_j)= u^s(x_i , y_j) \left(1 + \delta E_{i,j}\right) \quad \text{and} \quad  \partial_{\nu} u^{s,\delta} (x_i , y_j) =  \partial_{\nu} u^{s} (x_i , y_j) \left(1 + \delta E_{i,j}\right)$$
where $E$ is the random matrix of size $m \times m$ where the Frobenius matrix norm $\|E\|_{F} = 1$. 

Now, in order to approximate the imaging functionals using the trapezoidal rule to approximate the integrals. In the functional $I_{\text{CD}}$ this corresponds to the 2D trapezoidal rule approximation for the integral over $\Gamma \times \Gamma$. For computing $I_{\text{FF}}$, we first discretized the near-field operator $N$ via the $m$ point trapezoidal rule on $\Gamma$ and Dirichlet-to-Far-Field transformation $\mathcal{Q}_M$ via a uniformly spaced $m$ point trapezoidal rule on $\mathbb{S}^{1}$. Then, the resulting inner-product is also discretized by the trapezoidal rule.  Here, we take $m = 100$, $\rho = 2$, $M = 20$ in all the following examples. Since direct sampling methods are known to be very robust against noise in the data, we take the noise level $\delta = 50\%$ in the following examples.

The wave number $k  = 8$ is fixed with parameters $\{Q,q\}$  taken to be either $\{Q_1,q_1\}$, $\{Q_2,q_2\}$,  and $\{Q_3,q_3\}$ in the scatterer with
\begin{equation*}
\{Q_1,q_1\} = \left\{\left[\begin{array}{cc}
0 & 0 \\
0 & 0
\end{array} \right], 0.3 \right\}, \; \{Q_2,q_2\} = \left\{\left[\begin{array}{cc}
0.3 & 0 \\
0 & 0.5
\end{array} \right], 0 \right\}, \; \{Q_3,q_3\}= \left\{ \left[\begin{array}{cc}
0.3 & 0 \\
0 & 0.5
\end{array} \right], 0.2 \right\}.
\end{equation*}
Notice that $\{Q_1,q_1\}$ corresponds to an isotropic scatterer where as $\{Q_2,q_2\}$,  and $\{Q_3,q_3\}$  corresponds to an anisotropic scatterer. We provided examples of reconstructing multiple scatterers for both cases. The geometry of the scatterers to be recovered are given in Figure \ref{scatterers}.
\begin{figure}[h!!!] 
\centering
\subfloat[One Scatterer]{\includegraphics[width=0.32\linewidth]{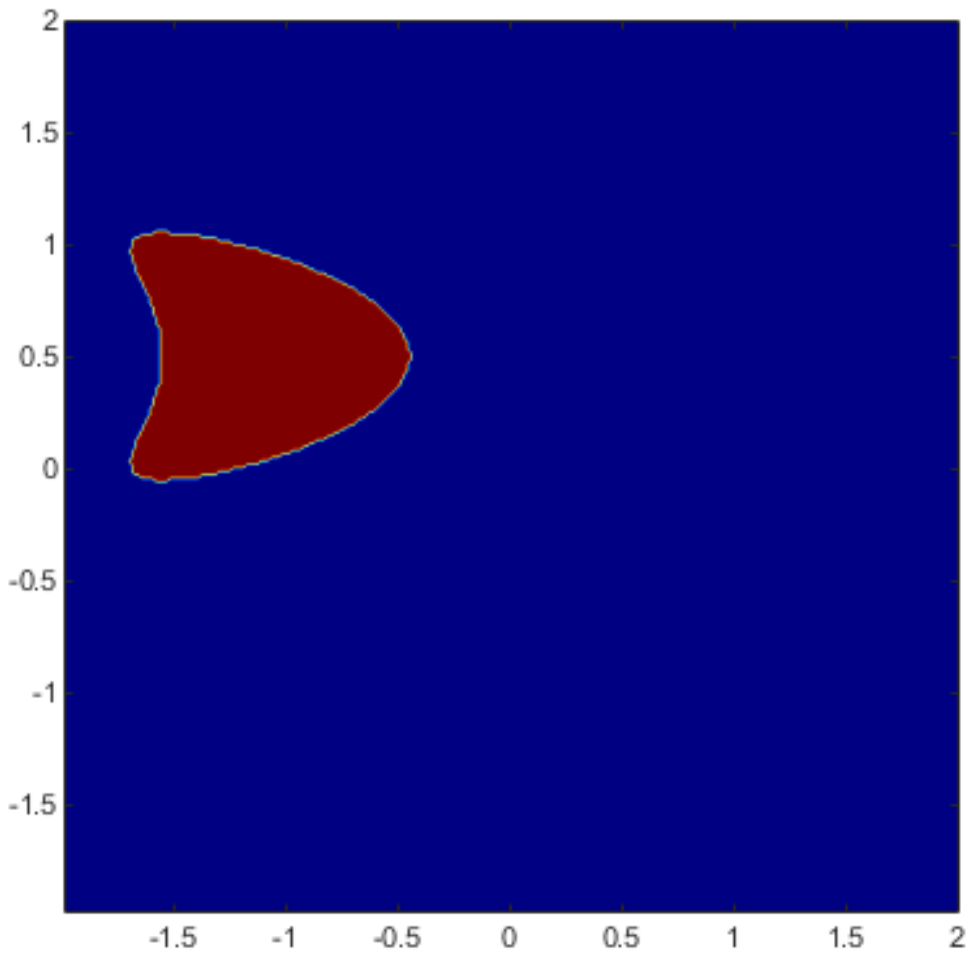}}
\subfloat[Two Scatterers]{\includegraphics[width=0.32\linewidth]{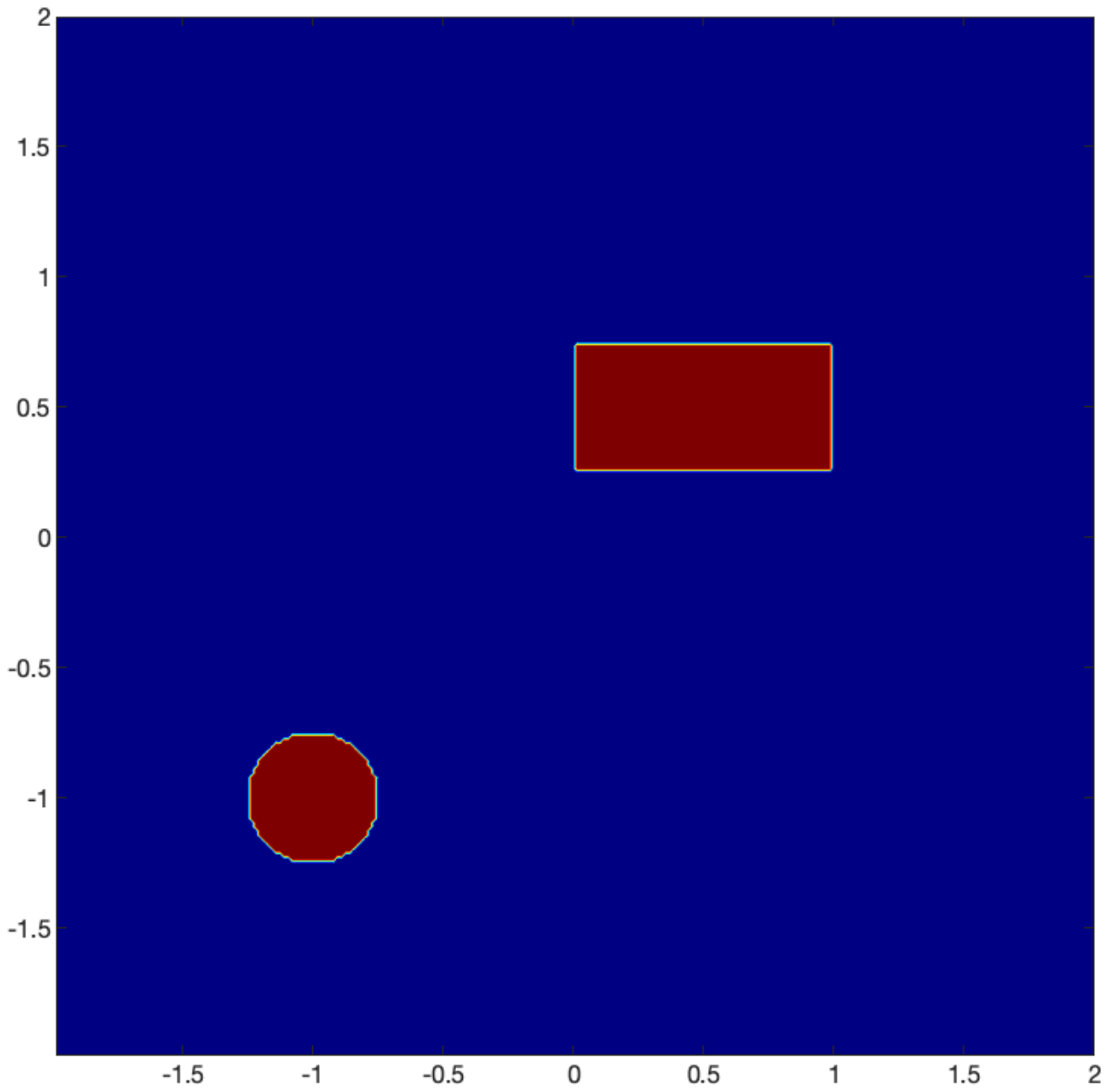}}
\subfloat[Three Scatterers]{\includegraphics[width=0.32\linewidth]{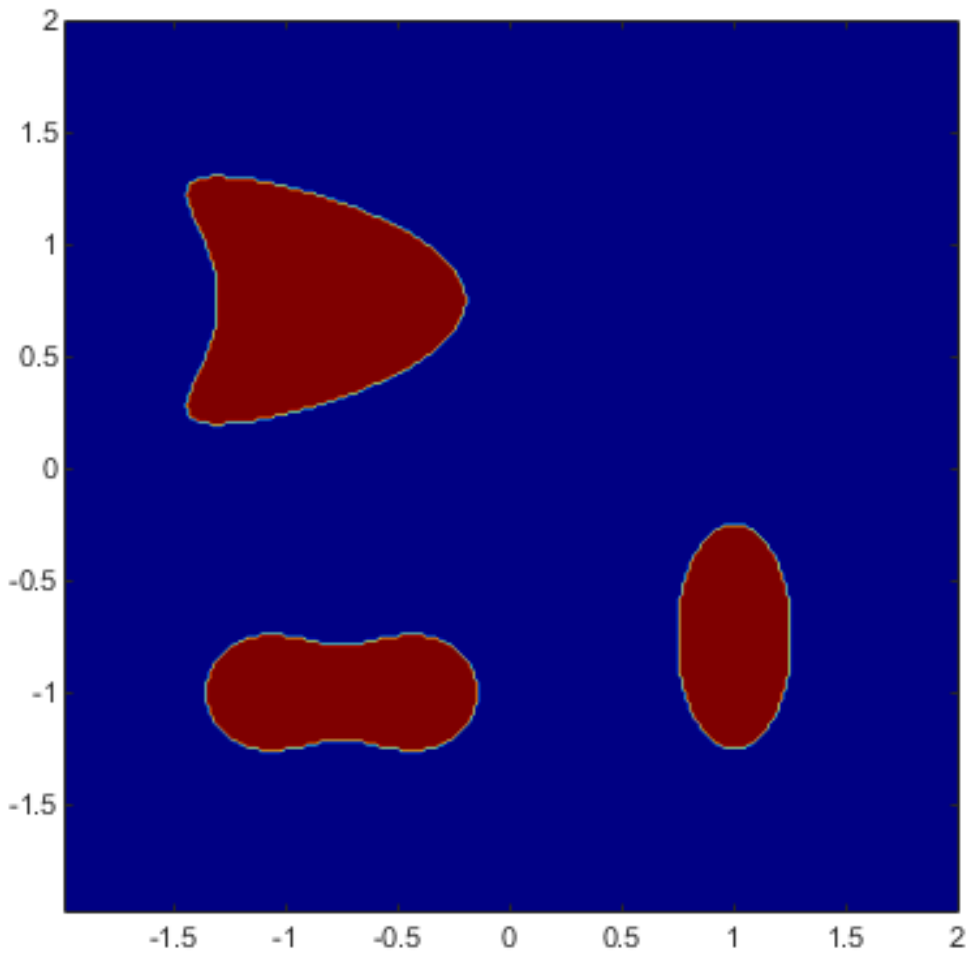}}
\caption{The exact geometry of the scatterer(s)}
\label{scatterers}
\end{figure}
Where we show that the imaging functionals can reconstruct multiple scatterers. In order to visualize the imaging functionals we provide the contour plot for $I_{\text{FF}} (z)$ and $I_{\text{CD}}(z)$ in the region $[-2,2]\times [-2,2]$. \\

\noindent{\bf Example 1: One Scatterer} \\
For our first numerical example we will consider recovering the kite shaped scatterer depicted in Figure \ref{scatterers} (a). We plot the imaging functionals to recover the kite-shaped scatterer $D$ where the boundary $\partial D=$kite which is given by 
\begin{equation*}
\begin{array}{ll}
\text{kite} & = \; \left\{(x_1, x_2)  \, : \, \begin{array}{ll}
x_1 &= -0.75 + 0.3\cos t - 1.84(0.75 + 0.3\sin t)^2, \\
x_2 &= 0.75 + 0.3\sin t ;
\end{array} \; 0 \leq t \leq 2\pi\right\}. 
\end{array}
\end{equation*}
The reconstruction for $I_{\text{FF}} (z)$ is given in Figure \ref{recon1FF} and $I_{\text{CD}}(z)$ is given in Figure \ref{recon1CD}, respectively. Here we consider the case of an isotropic and anisotropic scatterer. \\
\begin{figure}[ht]
\centering
\subfloat[$I_{\text{FF}}(z)\quad \text{with} \quad \{Q_1 ,q_1\}$]{\includegraphics[width=0.32\linewidth]{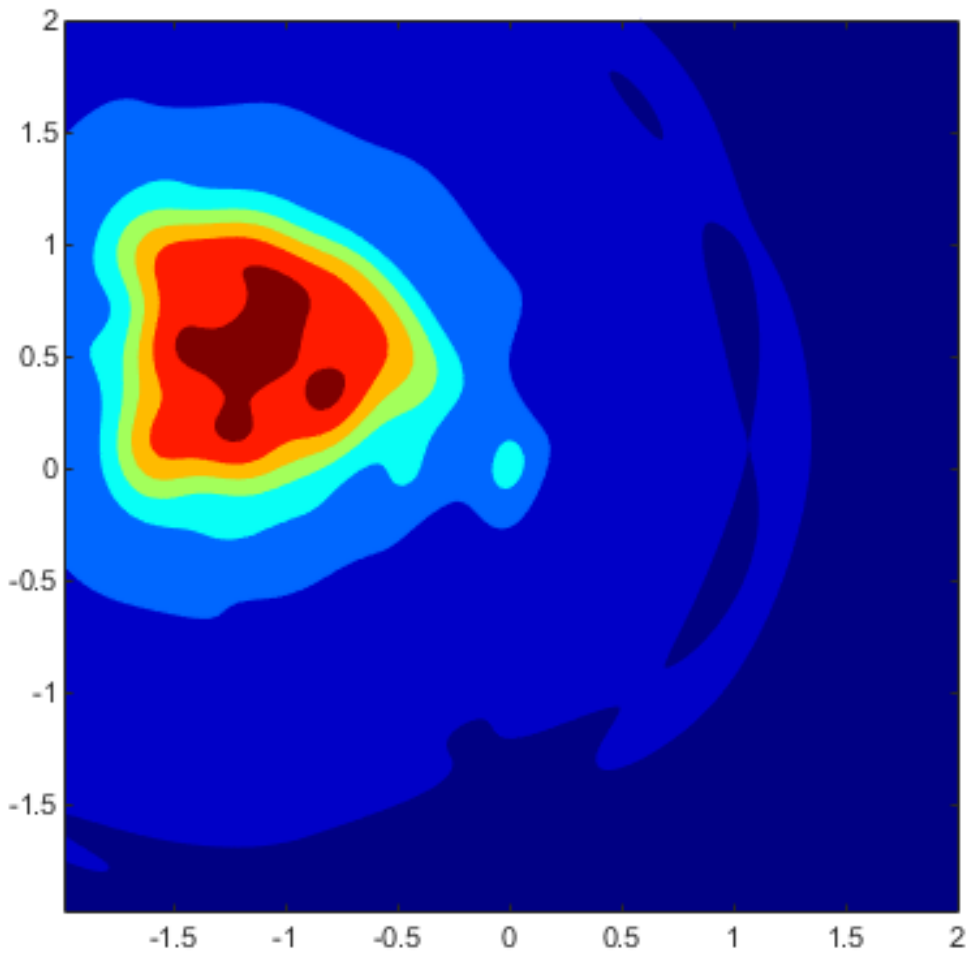}} 
\subfloat[$I_{\text{FF}}(z)\quad \text{with} \quad \{Q_2 ,q_2\}$]{\includegraphics[width=0.32\linewidth]{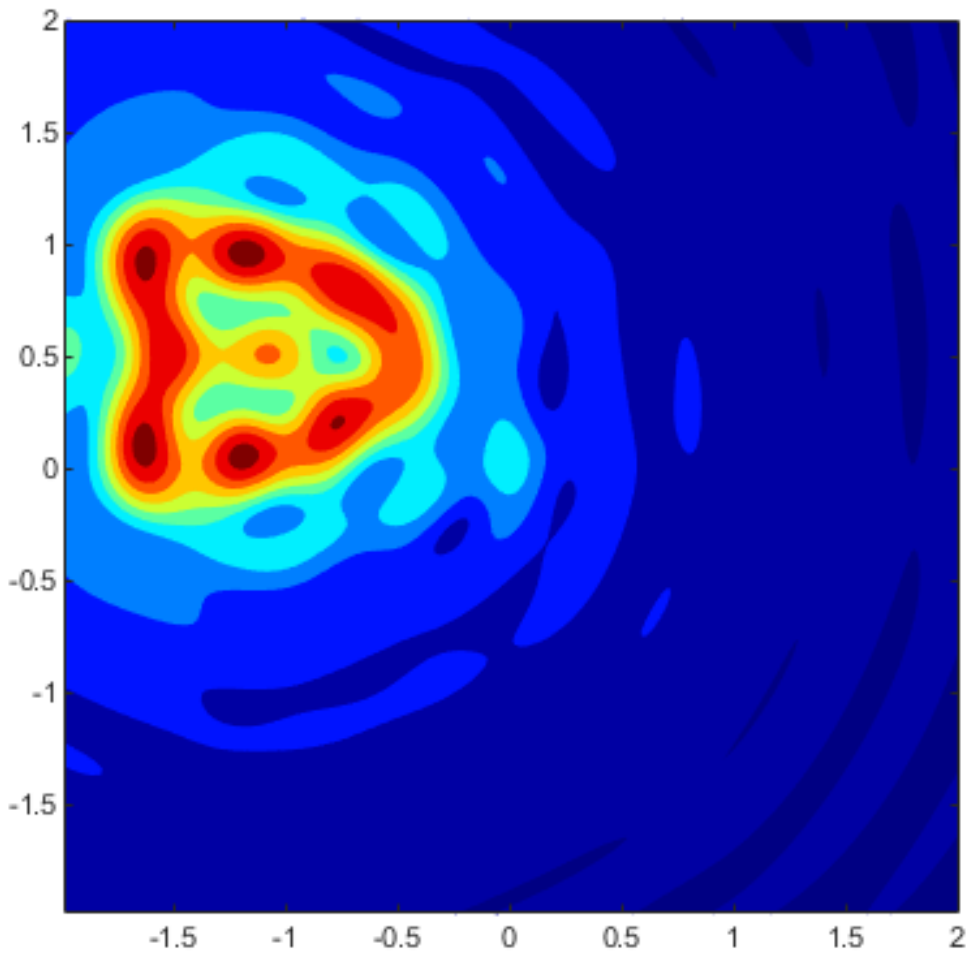}} 
\subfloat[$I_{\text{FF}}(z)\quad \text{with} \quad \{Q_3 ,q_3\}$]{\includegraphics[width=0.32\linewidth]{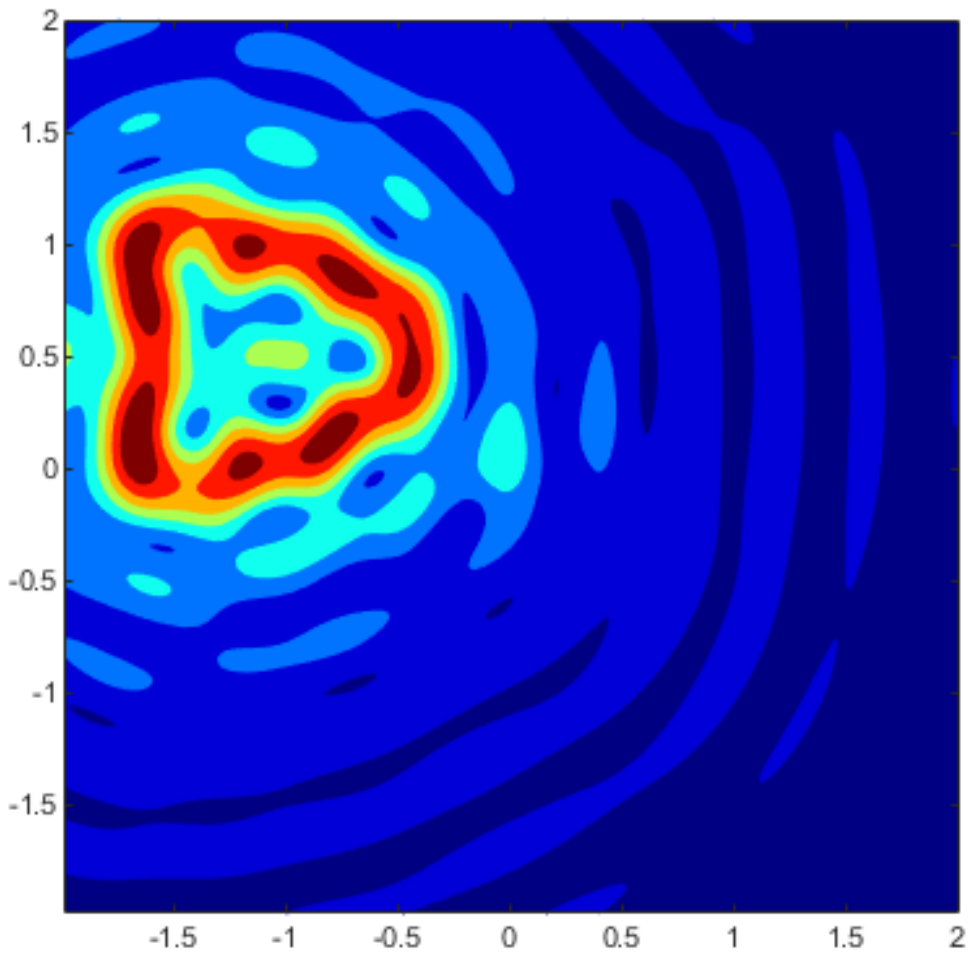}}  
\caption{The reconstruction of the kite-shaped scatterer $D$ where the boundary $\partial D=$kite by the direct sampling method with `Far-Field' transformation.}
\label{recon1FF} 
\end{figure}

\begin{figure}[ht]
\centering
\subfloat[$I_{\text{CD}}(z)\quad \text{with} \quad \{Q_1 ,q_1\}$]{\includegraphics[width=0.32\linewidth]{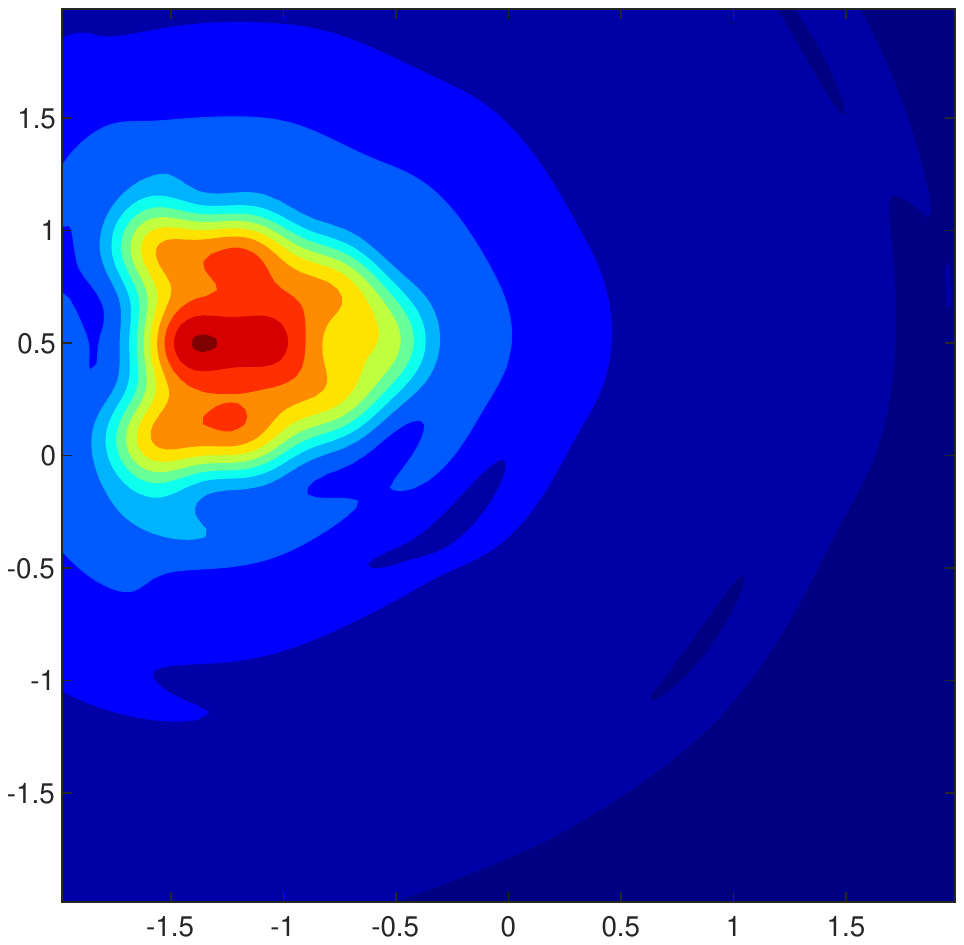}} 
\subfloat[$I_{\text{CD}}(z)\quad \text{with} \quad \{Q_2 ,q_3\}$]{\includegraphics[width=0.32\linewidth]{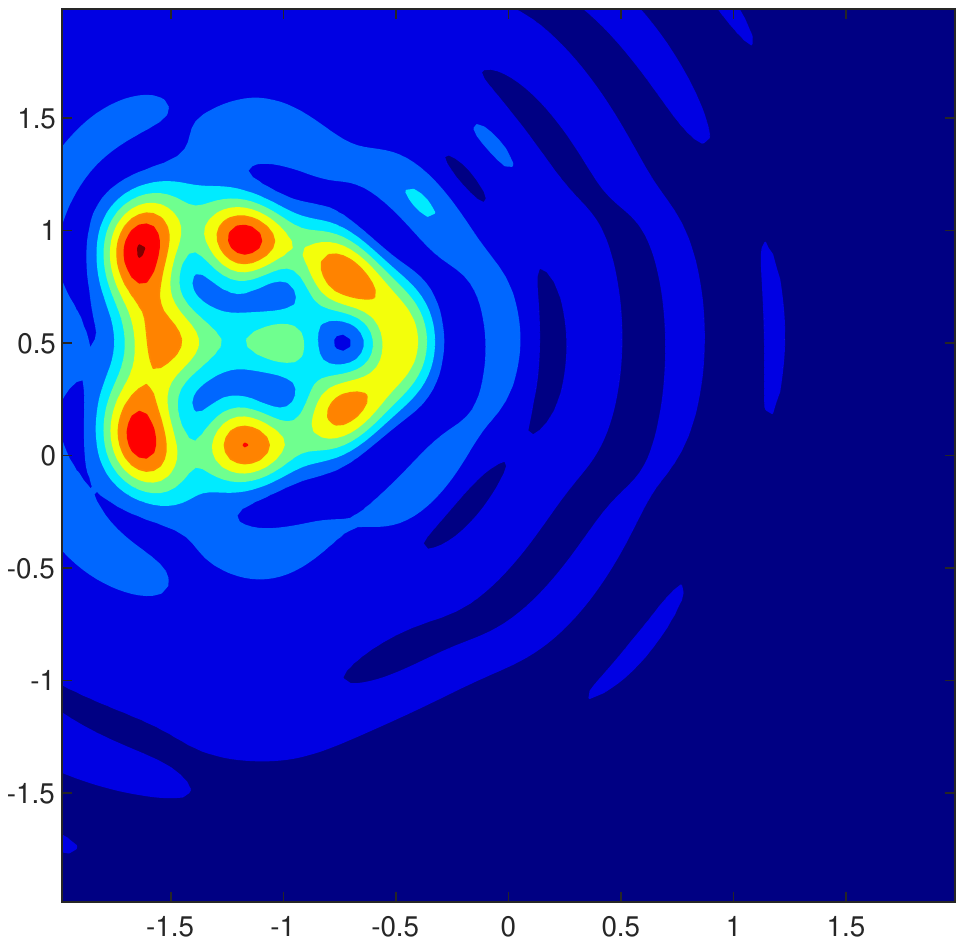}} 
\subfloat[$I_{\text{CD}}(z) \quad \text{with}\quad \{Q_3 ,q_3\}$]{\includegraphics[width=0.32\linewidth]{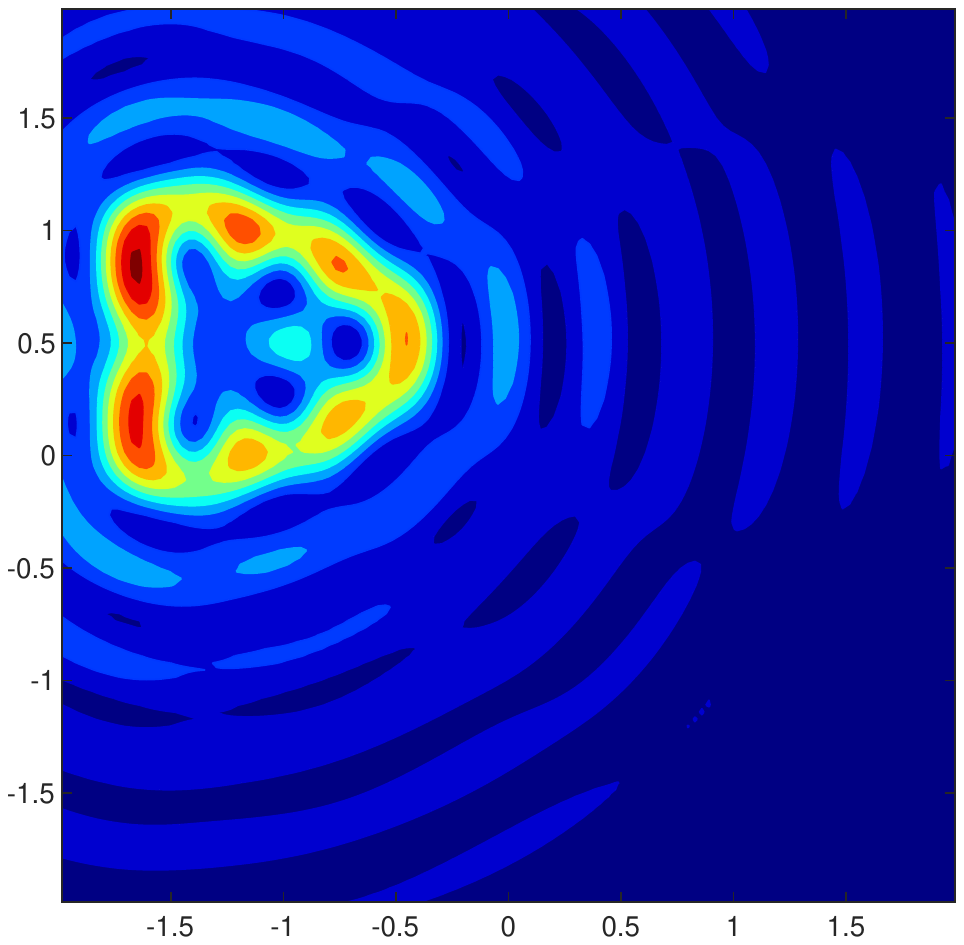}}   
\caption{The reconstruction of the kite-shaped scatterer $D$ where the boundary $\partial D=$kite by the direct sampling method with  Cauchy data. }
\label{recon1CD} 
\end{figure}

\noindent{\bf Example 2: Two Scatterer} \\
Here we consider recovering a scatterer made of two disjoint components. The geometry of the scatterer $D$ is given in Figure \ref{scatterers} (b). Again we present the contour plot the imaging functionals where the boundary is given by $\partial D = \text{disk} \cup \text{rectangle}$. The disk and rectangle are given by 
\begin{equation*}
\begin{array}{ll}
\text{disk} & = \; \{(x_1, x_2) \,\, :  \,\, (x_1 + 1)^2 + (x_2 + 1)^2 = 0.25^2 \} \\[1.5ex]
\text{rectangle} & = \; \{(x_1, x_2)   \,\, : \,\, |x_1 - 0.5| = 0.5, |x_2 + 0.5| = 0.25 \}.
\end{array}
\end{equation*}
The reconstruction for $I_{\text{FF}} (z)$ is given in Figure \ref{recon2FF} and $I_{\text{CD}}(z)$ is given in Figure \ref{recon2CD}, respectively. Here we consider the case of an isotropic and anisotropic scatterer. \\
\begin{figure}[ht]
\centering
\subfloat[$I_{\text{FF}}(z)\quad \text{with} \quad \{Q_1 ,q_1\}$]{\includegraphics[width=0.32\linewidth]{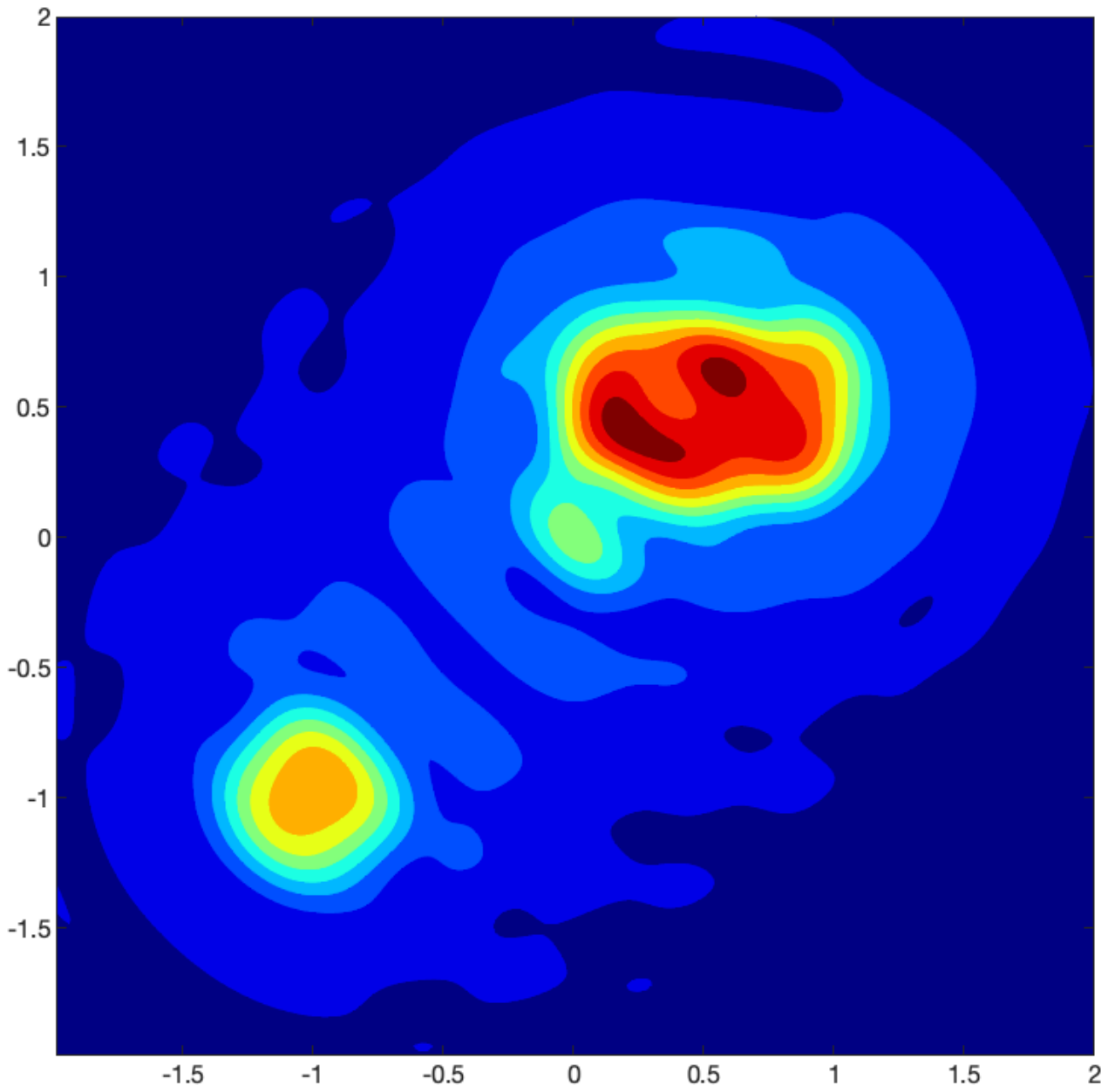}} 
\subfloat[$I_{\text{FF}}(z)\quad \text{with} \quad \{Q_2 ,q_2\}$]{\includegraphics[width=0.32\linewidth]{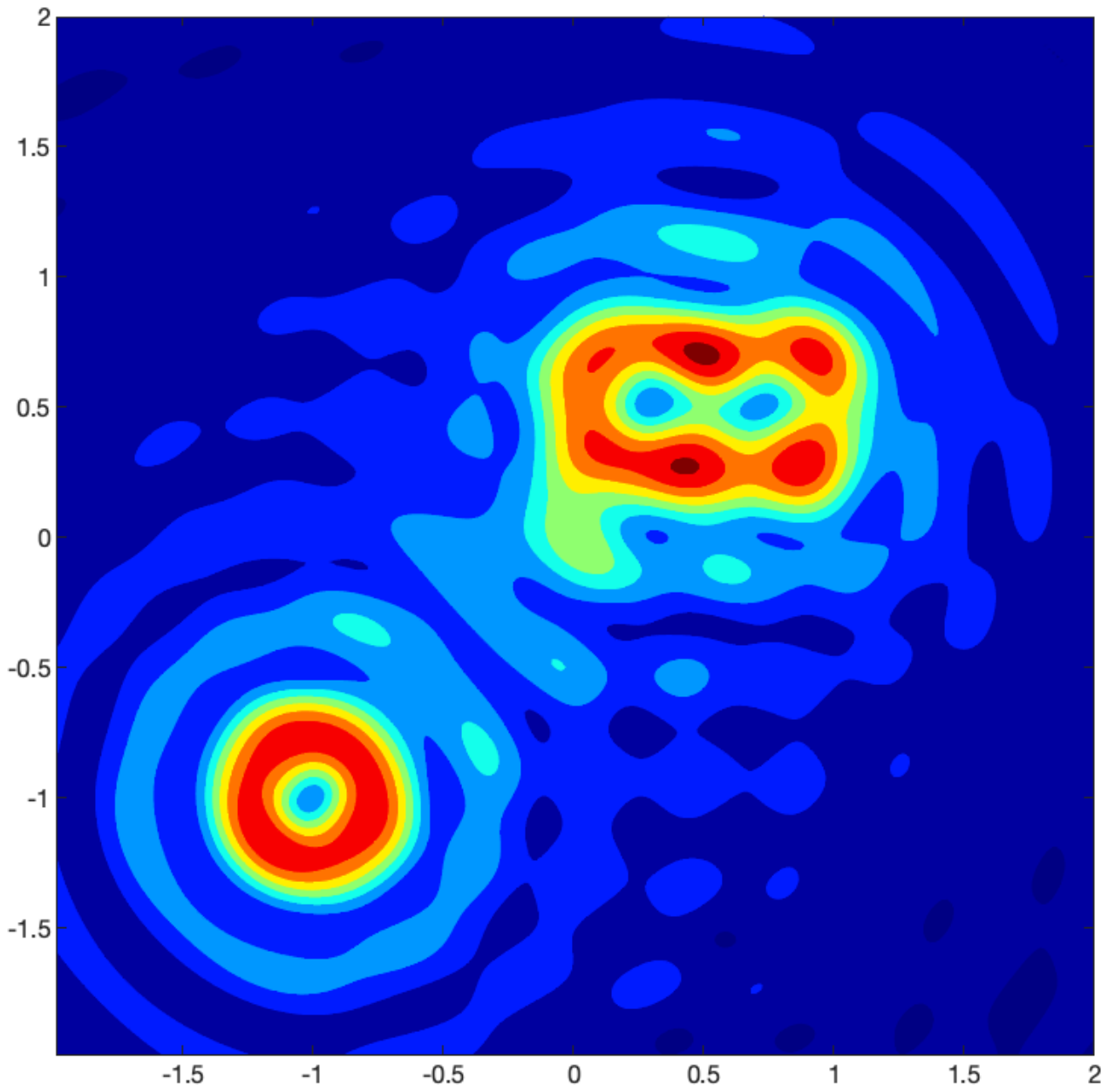}} 
\subfloat[$I_{\text{FF}}(z)\quad \text{with} \quad \{Q_3 ,q_3\}$]{\includegraphics[width=0.32\linewidth]{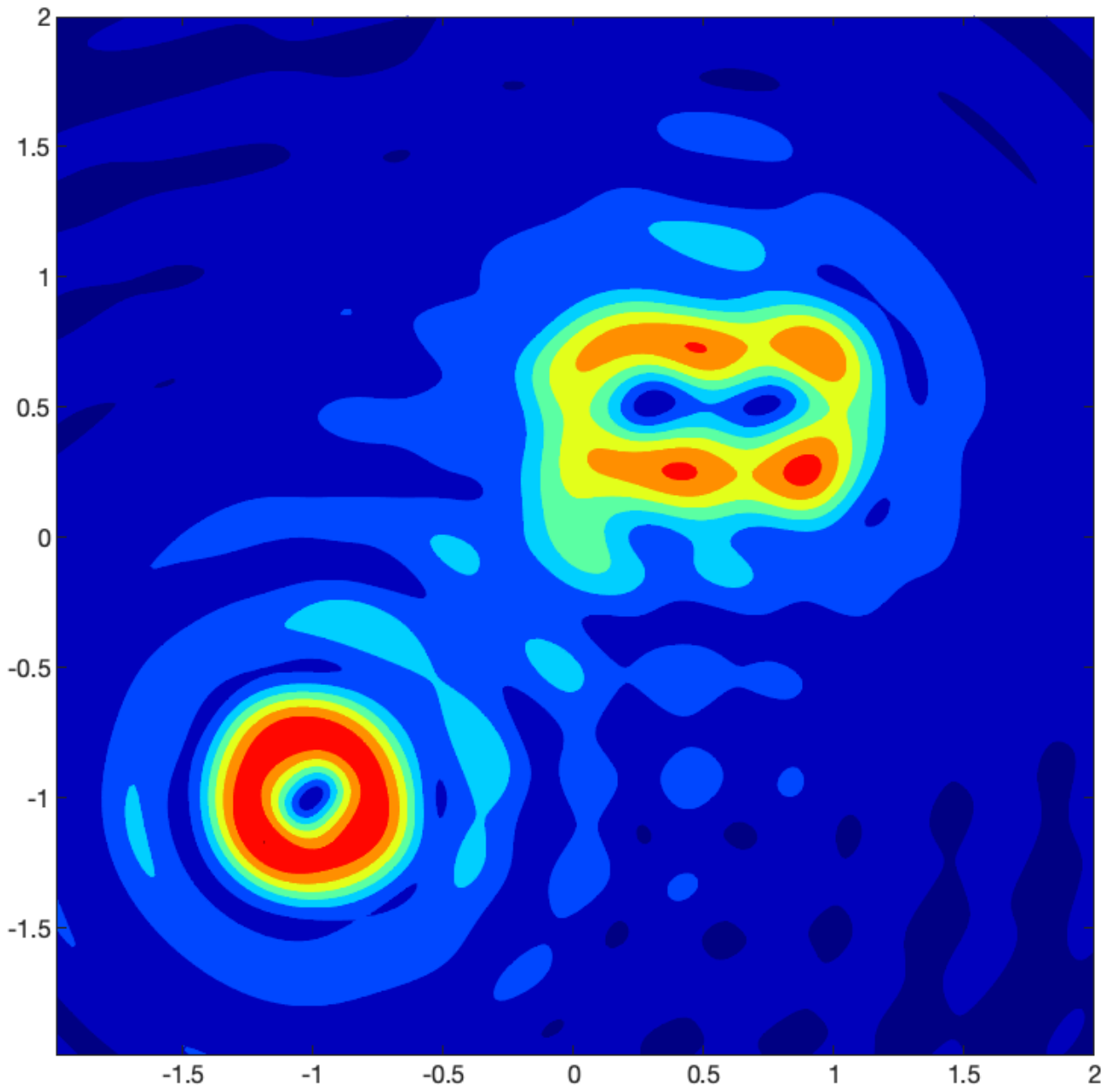}}  
\caption{The reconstruction of the scatterer with two disjoint components $\partial D = \text{disk} \cup \text{rectangle}$ by the direct sampling method with `Far-Field' transformation. }
\label{recon2FF} 
\end{figure}

\begin{figure}[ht]
\centering
\subfloat[$I_{\text{CD}}(z) \quad \text{with} \quad \{Q_1 ,q_1\}$]{\includegraphics[width=0.32\linewidth]{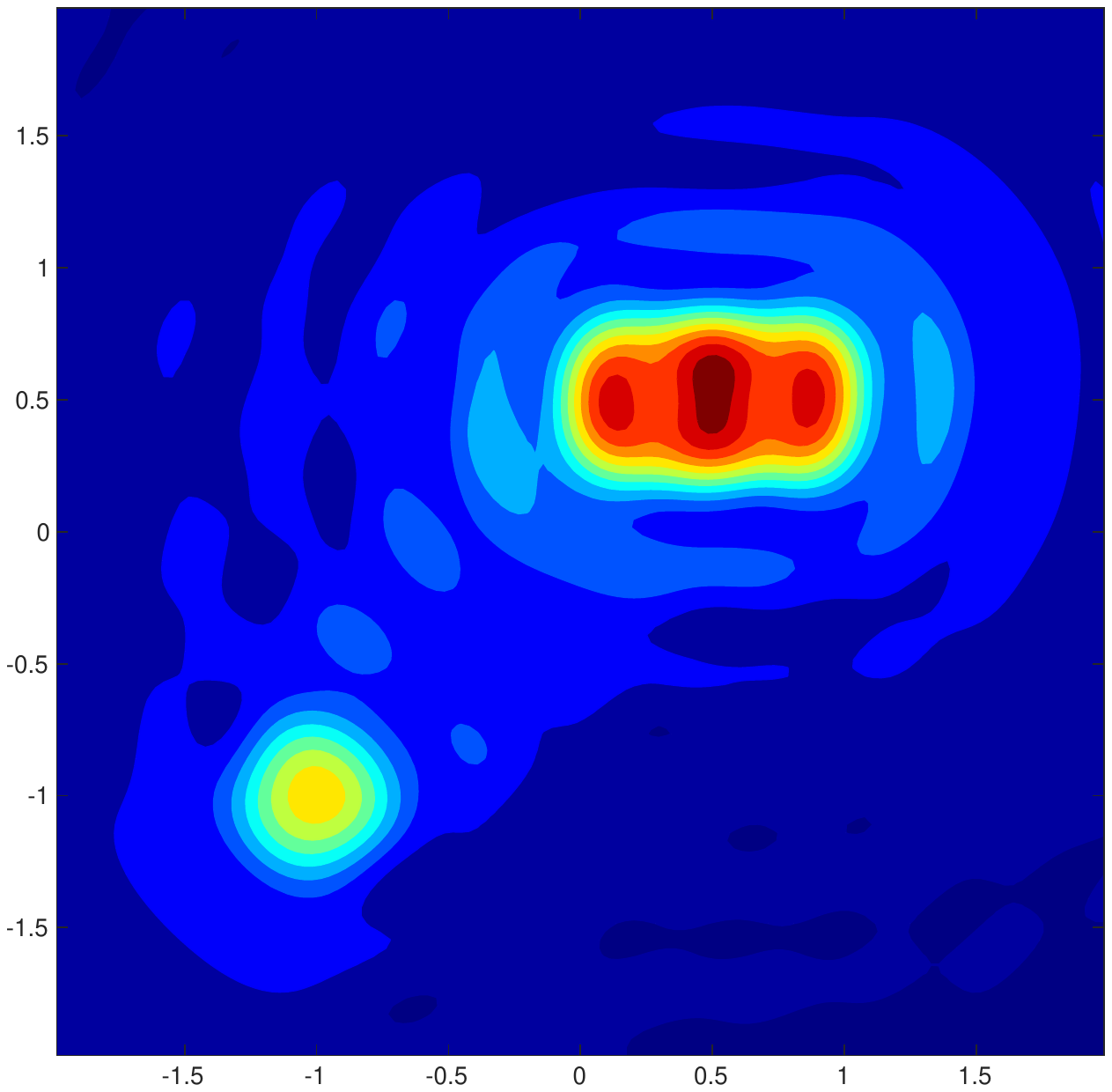}} 
\subfloat[$I_{\text{CD}}(z) \quad \text{with} \quad \{Q_2 ,q_2\}$]{\includegraphics[width=0.32\linewidth]{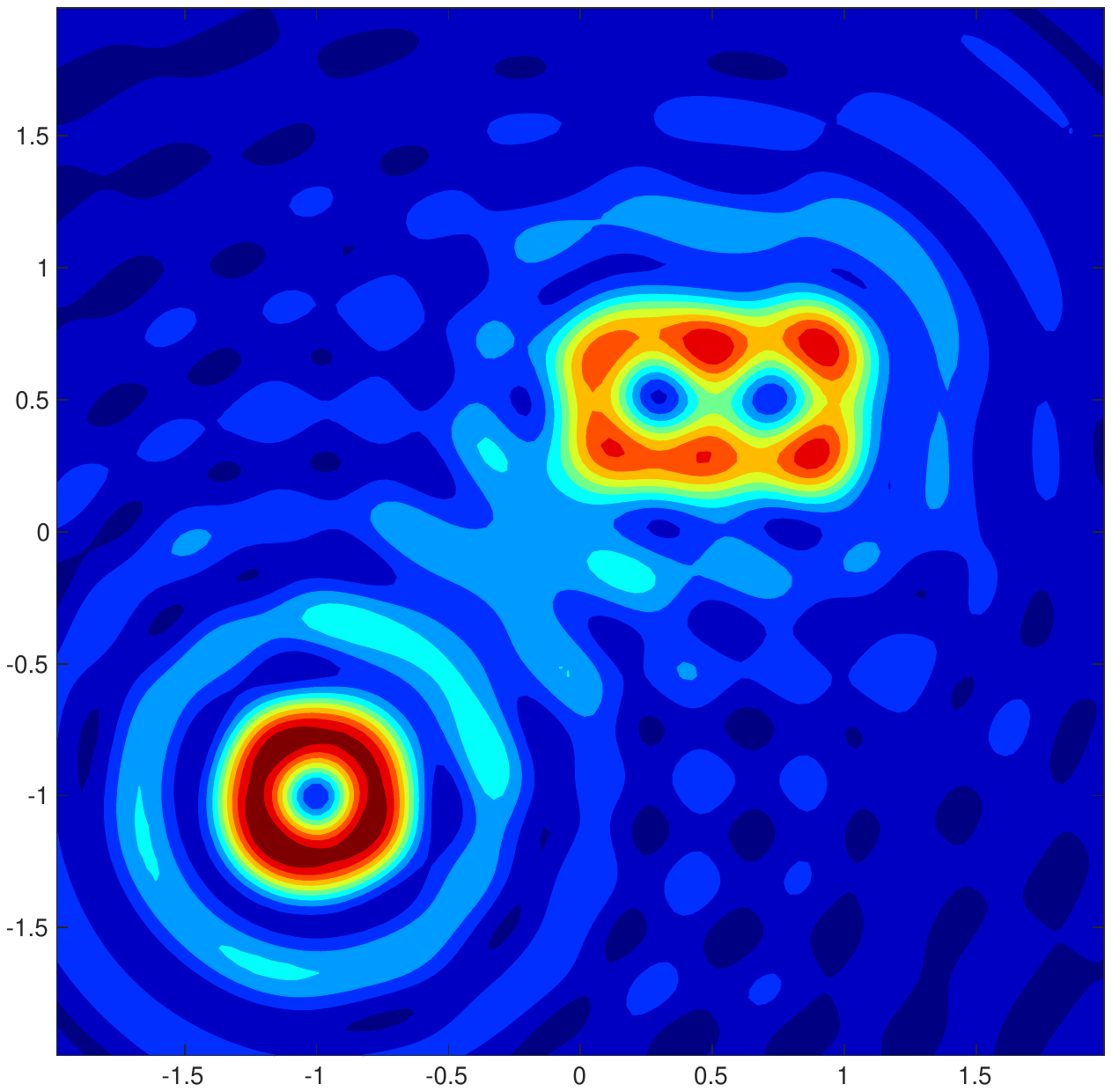}} 
\subfloat[$I_{\text{CD}}(z) \quad \text{with} \quad \{Q_3 ,q_3\}$]{\includegraphics[width=0.32\linewidth]{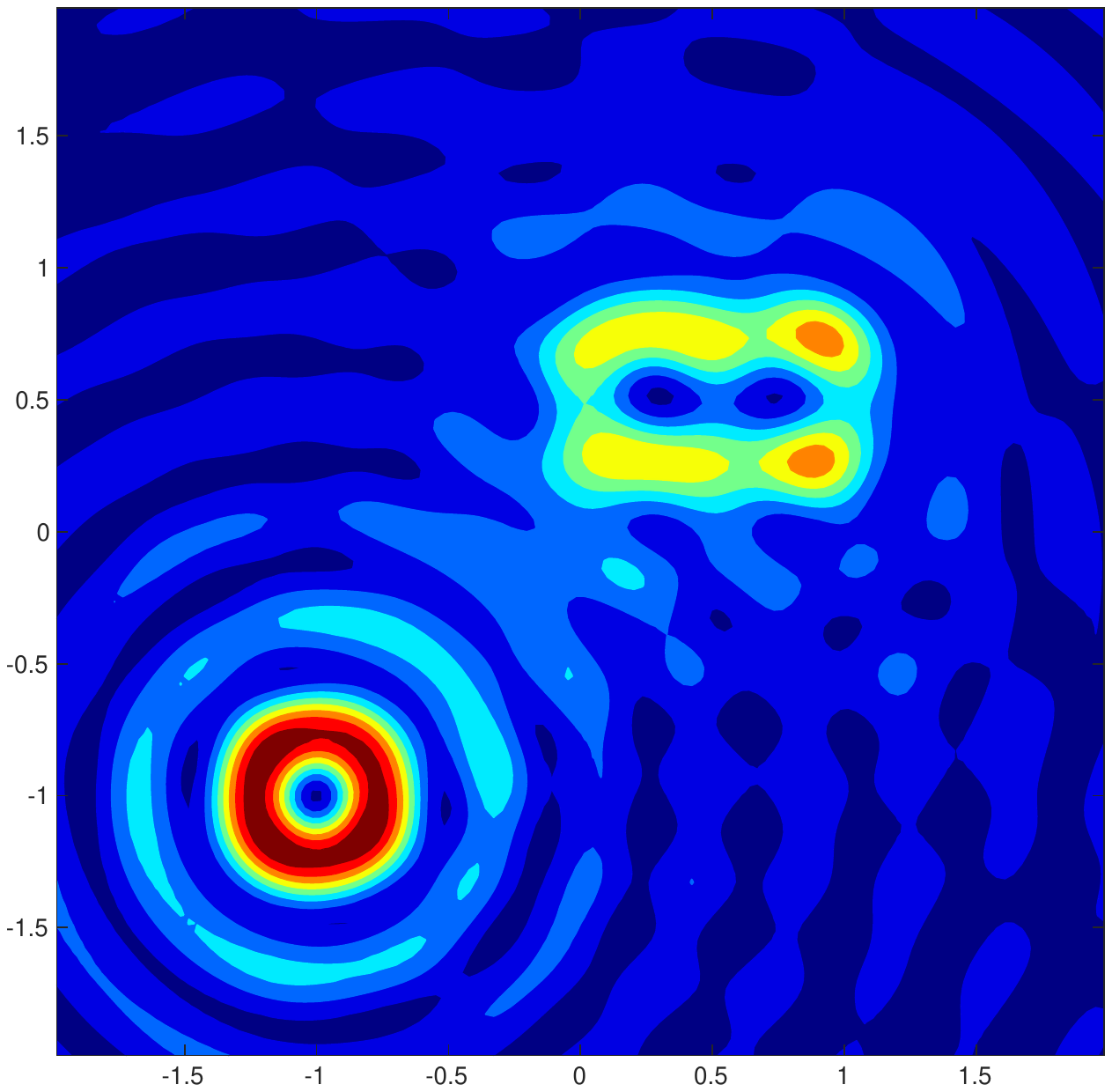}}   
\caption{The reconstruction of the scatterer with two disjoint components $\partial D = \text{disk} \cup \text{rectangle}$ by the direct sampling method with Cauchy data. }
\label{recon2CD} 
\end{figure}

\noindent{\bf Example 3: Three Scatterer} \\
For our last example we will consider recovering the scatterer that is comprised of three disjoint components as depicted in Figure \ref{scatterers} (c). We plot the imaging functionals to recover scatterer $D$ where the boundary $\partial D = \text{kite} \cup \text{ellipse}  \cup \text{peanut} $ where the geometry of the scatterer is given by 
\begin{equation*}
\begin{array}{ll}
\text{kite} & = \; \left\{(x_1, x_2) \, : \, \begin{array}{ll}
x_1 &= -0.75 + 0.2\cos t - 1.84(0.75 + 0.2\sin t)^2, \\
x_2 &= 0.75 + 0.2\sin t,
\end{array} \; 0 \leq t \leq 2\pi\right\} \\[2.5ex]
\text{ellipse} & = \; \{(x_1, x_2)  \, : \, \; x_1 = 1 + 0.25 \cos t \, , \, x_2 = -0.75 + 0.5\sin t , \,  \; 0 \leq t \leq 2\pi \}, \\[2ex]
\text{peanut} &= \left\{(x_1,x_2)  \, : \, \;  \big[(x_1+0.75)^2 + (x_2+1)^2 \big]^2 - 0.32\big[(x_1 + 0.75)^2 - (x_2 + 1)^2\big] = 0.0154 \right\}.
\end{array}
\end{equation*}
The reconstruction for $I_{\text{FF}} (z)$ is given in Figure \ref{recon3FF} and $I_{\text{CD}}(z)$ is given in Figure \ref{recon3CD}, respectively. Here we consider the case of an isotropic and anisotropic scatterer. \\
\begin{figure}[ht]
\centering
\subfloat[$I_{\text{FF}}(z)\quad \text{with} \quad \{Q_1 ,q_1\}$]{\includegraphics[width=0.32\linewidth]{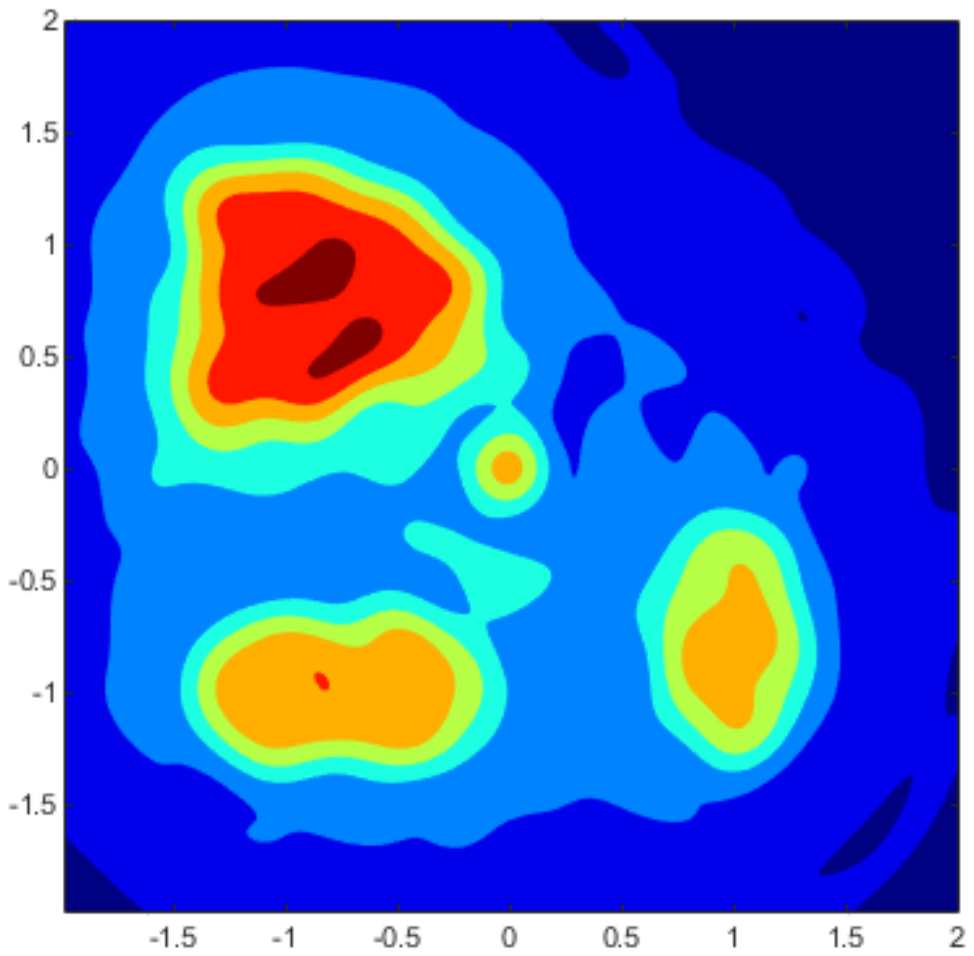}} 
\subfloat[$I_{\text{FF}}(z)\quad \text{with} \quad \{Q_2 ,q_2\}$]{\includegraphics[width=0.32\linewidth]{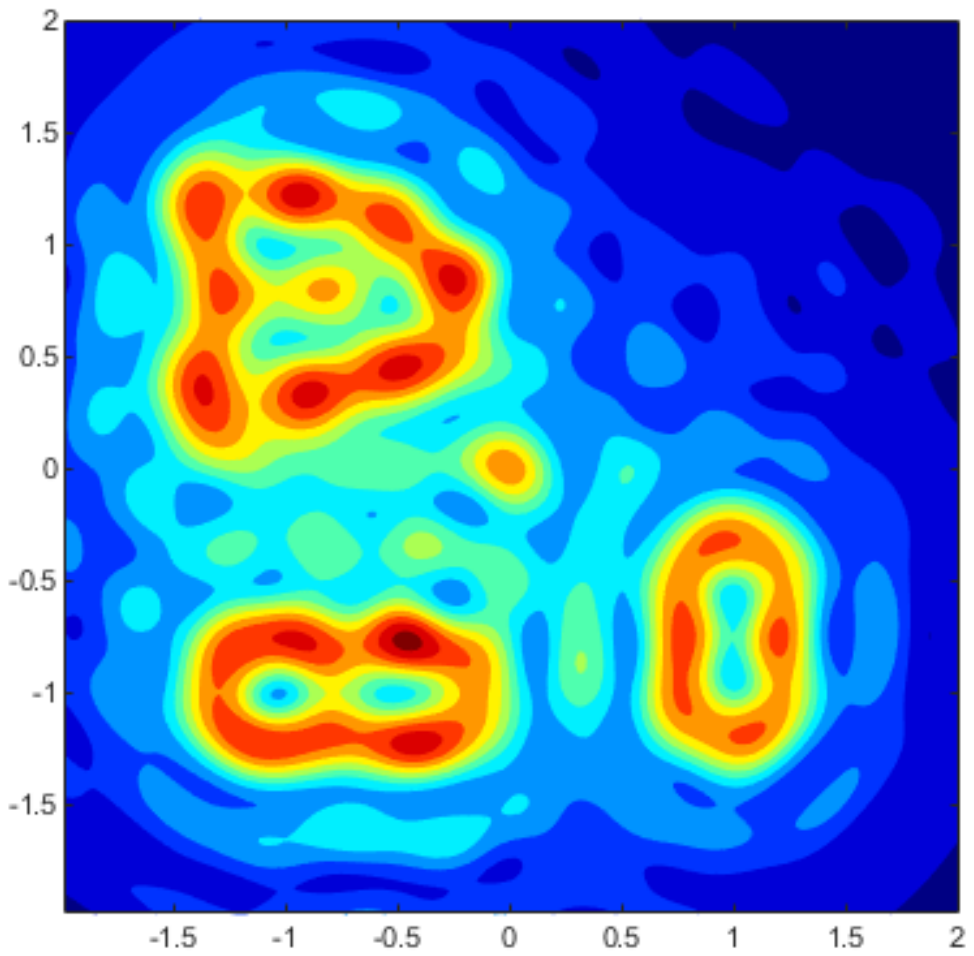}} 
\subfloat[$I_{\text{FF}}(z)\quad \text{with} \quad \{Q_3 ,q_3\}$]{\includegraphics[width=0.32\linewidth]{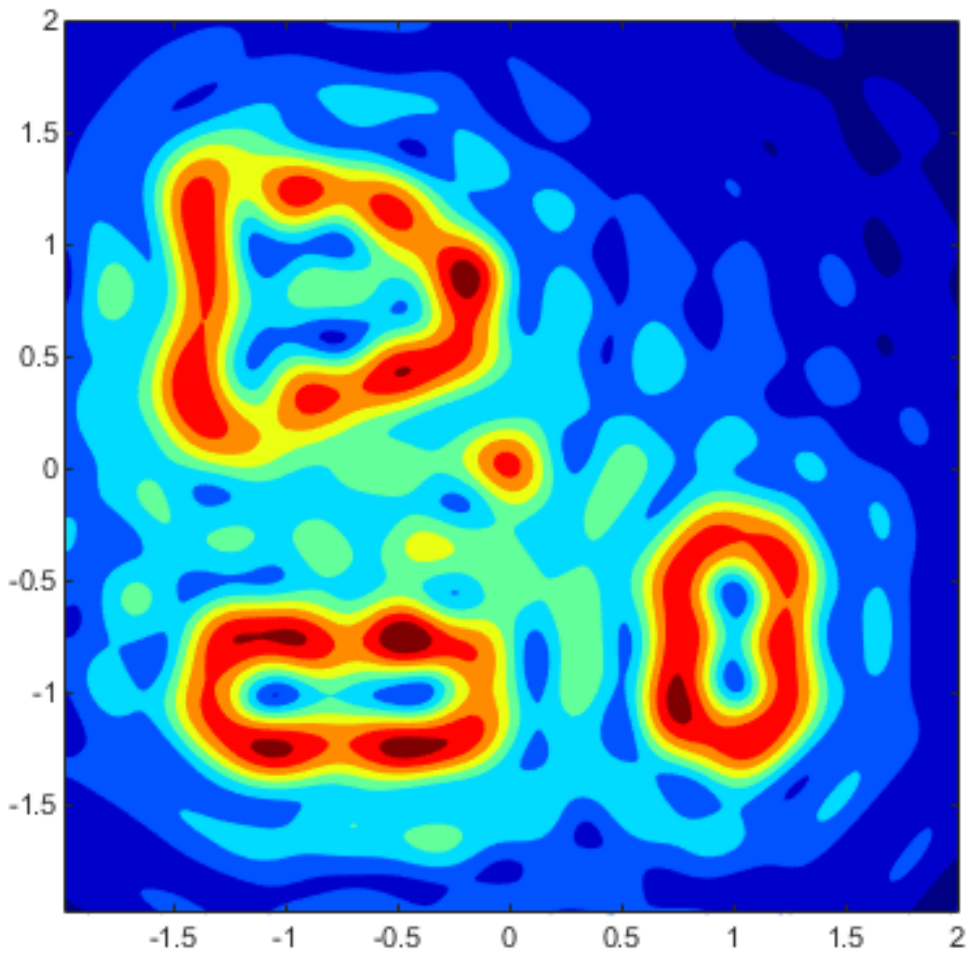}}  
\caption{The reconstruction of the scatterer with three disjoint components $\partial D = \text{kite} \cup \text{ellipse}  \cup \text{peanut} $  by the direct sampling method with `Far-Field' transformation.  }
\label{recon3FF} 
\end{figure}

\begin{figure}[ht]
\centering
\subfloat[$I_{\text{CD}}(z)\quad \text{with} \quad \{Q_1 ,q_1\}$]{\includegraphics[width=0.32\linewidth]{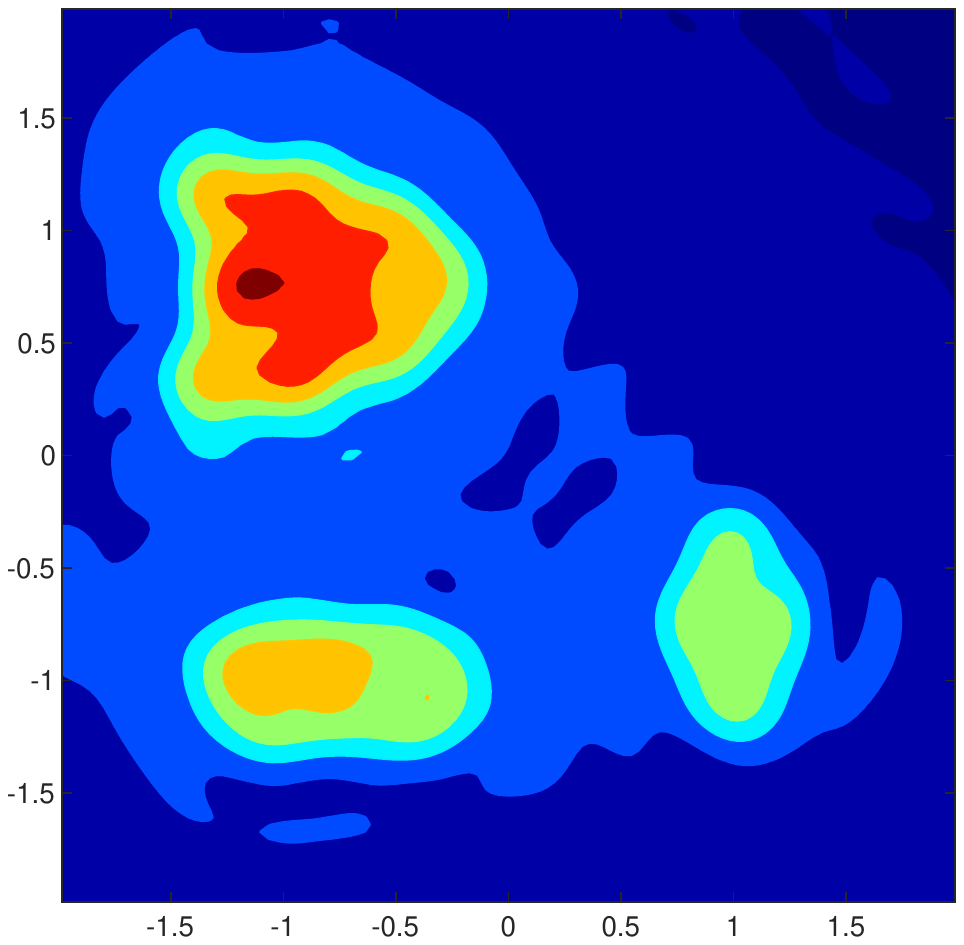}} 
\subfloat[$I_{\text{CD}}(z)\quad \text{with} \quad \{Q_2 ,q_2\}$]{\includegraphics[width=0.32\linewidth]{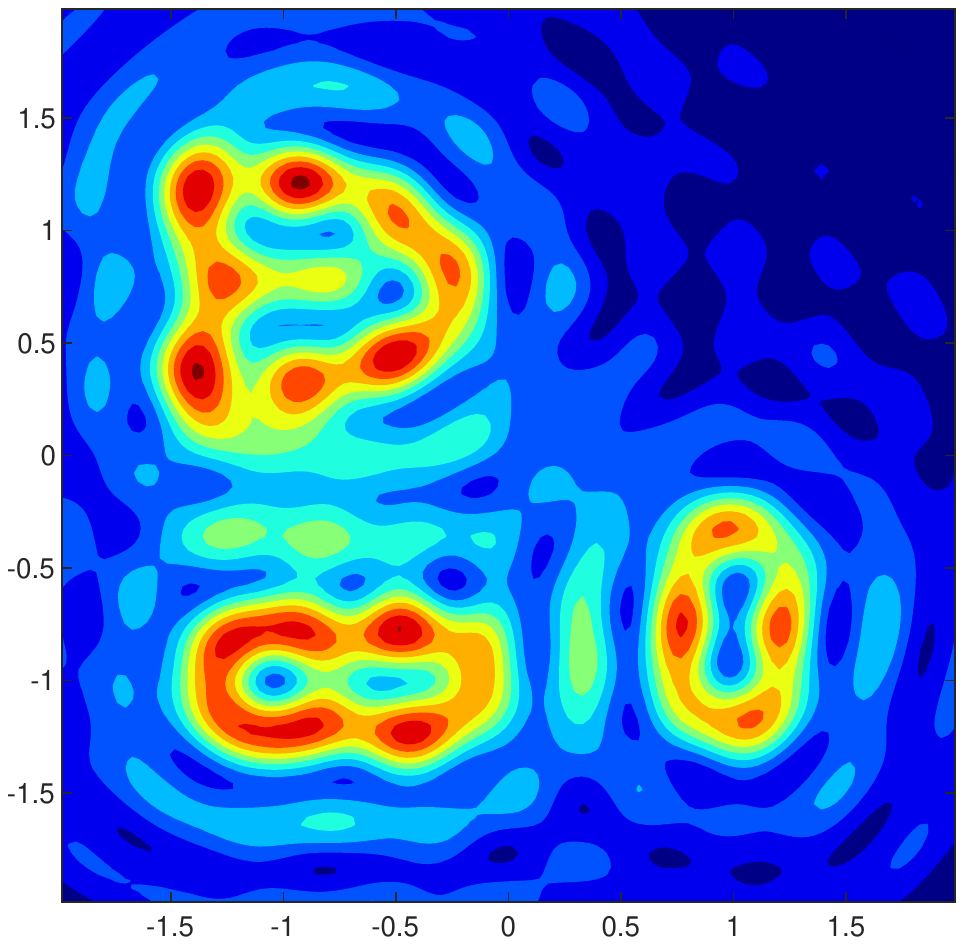}} 
\subfloat[$I_{\text{CD}}(z)\quad \text{with} \quad \{Q_3 ,q_3\}$]{\includegraphics[width=0.32\linewidth]{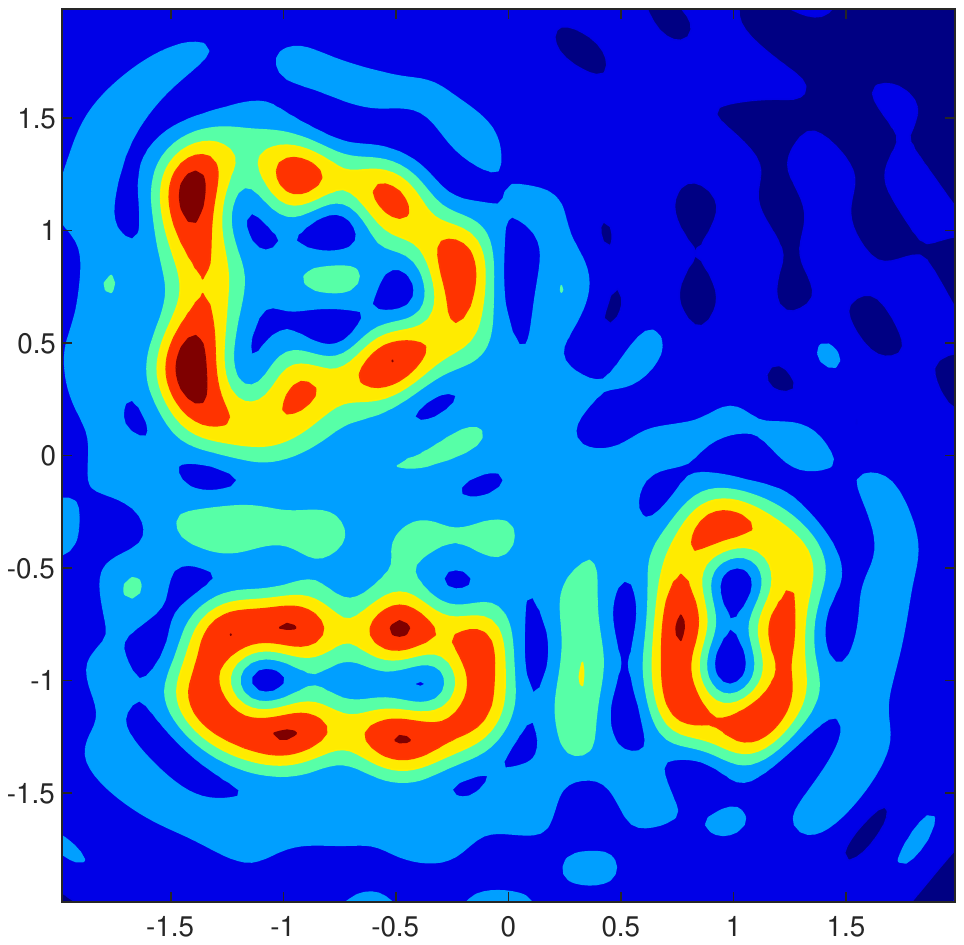}}   
\caption{The reconstruction of the scatterer with two disjoint components $\partial D = \text{kite} \cup \text{ellipse}  \cup \text{peanut}$ by the direct sampling method with Cauchy data.}
\label{recon3CD} 
\end{figure}

Here we see that the two imaging functionals are able to recover both isotropic and anisotropic scatterers. We also see that one can recover multiple scatterers in the presents of a significant amount of noise in the data. The examples given validate the theoretical results given in the previous sections and shows the applicability of these new direct sampling functionals for recovering the region $D$ with little a priori information. \\

\noindent{\bf Acknowledgments:} The research of I. Harris is partially supported by the NSF DMS Grant 2107891. The research of D.-L. Nguyen is partially supported by the NSF DMS Grant 1812693.


\begin{thebibliography}{1}
\bibitem{CCH-book} 
F. Cakoni, D. Colton, and H. Haddar, 
{\it Inverse Scattering Theory and Transmission Eigenvalues} CBMS-NSF Regional Conference Series in Applied Mathematics, SIAM 2016.


\bibitem{Cakon2019}
F.~Cakoni, H.~Haddar, and A.~Lechleiter.
\newblock On the factorization method for a far field inverse scattering in the time domain.
\newblock {\em SIAM J. Math. Anal.}, 51:854--872, 2019.


\bibitem{Chen_2013}
{\sc J.~Chen, Z.~Chen, and G.~Huang}, {\em Reverse time migration for extended obstacles: acoustic waves}, Inverse Problems, 29 (2013), p.~085005.

\bibitem{CK3}
{\sc D.~Colton and R.~Kress}, {\em Inverse acoustic and electromagnetic
  scattering theory}, vol.~93 of Applied Mathematical Sciences, Springer, New
  York, third~ed., 2013.
  
 \bibitem{evans}
\newblock L. Evans, 
\newblock \emph{``Partial Differential Equation''},  
\newblock 2$^{nd}$ edition, AMS 2010.
 
 
\bibitem{heat-fm}
\newblock J. Guo, G. Nakamura and H. Wang,
\newblock The factorization method for recovering cavities in a heat conductor
\newblock preprint (2019)  arXiv:1912.11590.
 
  
\bibitem{Harri2019}
I.~Harris and A.~Kleefeld.
\newblock Analysis of new direct sampling indicators for far-field measurements.
\newblock {\em Inverse Problems}, 35:054002, 2019.

\bibitem{HarrisNguyen2020}
I.~Harris and D-L.~Nguyen.
\newblock Orthogonality Sampling Method for the Electromagnetic Inverse Scattering Problem.
\newblock {\em SIAM  Journal on Scientific Computing}, 42:3  B722--B737, 2020.

\bibitem{Harris-Rome}
\newblock I. Harris and S. Rome, 
\newblock Near field imaging of small isotropic and extended anisotropic scatterers, 
\newblock Applicable Analysis, {\bf 96} issue 10 (2017) , 1713-1736.

\bibitem{not-uniq} 
\newblock P. H\"{a}hner, 
\newblock On the uniqueness of the shape of a penetrable, anisotropic obstacle, 
\newblock J. Comput. Appl. Math. 116 (2000)167--180

\bibitem{nf-fm-isotropic} G. Hu, J. Yang, B. Zhang and H. Zhang, 
\newblock {Near-field imaging of scattering obstacles with the factorization method}
\newblock {\it Inverse Problems} {\bf 30} (2014)  095005.

\bibitem{Ito2012}
K.~Ito, B.~Jin, and J.~Zou.
\newblock A direct sampling method to an inverse medium scattering problem.
\newblock {\em Inverse Problems}, 28:025003, 2012.

\bibitem{Ito2013}
K.~Ito, B.~Jin, and J.~Zou.
\newblock A direct sampling method for inverse electromagnetic medium scattering.
\newblock {\em Inverse Problems}, 29:095018, 2013.



\bibitem{dsm-limap}
S. Kang, et al. 
\newblock Single- and Multi-Frequency Direct Sampling Methods in a Limited-Aperture Inverse Scattering Problem
\newblock {\it IEEE Access} 8:121637--121649, 2020


\bibitem{fm-paper}
A. Kirsch. Characterization of the shape of a scattering obstacle using the spectral data of the far field operator. {\it Inverse Problems} 14:1489, 1998

\bibitem{Kirsc2004}
A.~Kirsch.
\newblock The factorization method for {M}axwell's equations.
\newblock {\em Inverse Problems}, 20:S117--S134, 2004.


\bibitem{kirschbook} 
A. Kirsch and N. Grinberg, 
\newblock \emph{The Factorization Method for Inverse Problems}. 
\newblock Oxford University Press, Oxford 2008.


\bibitem{numericspaper}
 A. Lechleiter and D.-L.Nguyen. A trigonometric Galerkin method for volume integral equations arising in TM grating scattering, {\it Advances in Computational Mathematics} {\bf 40} (2014), 1--25.
 
 
\bibitem{dsm-fm} 
K. H. Leem, J. Liu and G. Pelekanos, 
\newblock Two direct factorization methods for inverse scattering problems, 
\newblock {\it Inverse Problems} {\bf 34} 125004 (2018).


\bibitem{Liu2017}
X.~Liu.
\newblock A novel sampling method for multiple multiscale targets from scattering amplitudes at a fixed frequency.
\newblock {\em Inverse Problems}, 33:085011, 2017.


\bibitem{dsm-dr} 
X. Liu and J. Sun, 
\newblock Data recovery in inverse scattering: From limited-aperture to full-aperture, 
\newblock {\it Journal of Computational Physics}, {\bf 386} (2019) 350--364.

\bibitem{McLean}
W. McLean,{\it``Strongly elliptic systems and boundary integral equations''}. Cambridge: Cambridge University Press 2000.

\bibitem{Nguye2019}
D.-L. Nguyen.
\newblock Direct and inverse electromagnetic scattering problems for
  bi-anisotropic media.
\newblock {\em Inverse Problems}, 35:124001, 2019.

\bibitem{Potth2010}
R.~Potthast.
\newblock A study on orthogonality sampling.
\newblock {\em Inverse Problems}, 26:074015, 2010.

\bibitem{Shixu}
S.~Meng, H.~Haddar, and F.~Cakoni. 
\newblock The factorization method for a cavity in an inhomogeneous medium.
\newblock {\em Inverse Problems}, 30:045008, 2014.


\bibitem{bessel-webpage}
 NIST, 
\newblock ``Asymptotic Expansions for Large Order.'' Digital Library of Mathematical Functions. 
\newblock https://dlmf.nist.gov/10.19
\end{thebibliography}
\end{document}